\newcommand\del[1]{}
\newcommand\comadel[1]{}
\newcommand\delc[1]{}
\newcommand\deld[1]{}
\newcommand\dele[1]{}
\newcommand{\bd}{\begin{displaymath}}
\newcommand{\ed}{\end{displaymath}}
\newcommand{\norm}[1]{\left| \left| #1 \right|\right|}
\newcommand{\be}{\begin{eqnarray}}
\newcommand{\ee}{\end{eqnarray}}
\newcommand{\ben}{\begin{eqnarray*}}
\newcommand{\een}{\end{eqnarray*}}
\newcommand{\bs}{\begin{subequations}}
\newcommand{\es}{\end{subequations}}
\newtheorem{theorem}{Theorem}[section]
\newtheorem{corollary}[theorem]{Corollary}
\newtheorem{lemma}[theorem]{Lemma}
\newtheorem{proposition}[theorem]{Proposition}
\newtheorem{example}[theorem]{Example}
\newtheorem{remark}[theorem]{Remark}
\numberwithin{equation}{section}
\def\R{\mathbb{R}}
\def\Nat{\mathbb{N}}
\def\L{\bf L}
\def\W\mathbf{W}
\def\L2{{L^2(\Omega)}}
\def\T{\mathcal{T}_{h}}
\def\veps{\varepsilon}
\def\norm#1#2{\v_hert\,#1\,\v_hert_{#2}}
\long\def\symbolfootnote[#1]#2{\begingroup%
\def\thefootnote{\fnsymbol{footnote}}\footnotetext[#1]{#2}\endgroup}
\newcommand{\Prob}{\mathbb{P}}
\newcommand{\E}{\mathbb{E}}
\newcommand{\cC}{C}
\newcommand{\cur}[1]{\left\{ #1 \right\}}
\newcommand{\sqa}[1]{\left[ #1 \right]}
\newcommand{\bra}[1]{\left( #1 \right)}
\newcommand{\abs}[1]{\left| #1 \right|}
\renewcommand{\norm}[1]{\left\| #1 \right\|}
\newcommand{\ang}[1]{\left< #1 \right>}
\renewcommand{\div}{\operatorname{div}}
\newcommand{\supp}{\operatorname{supp}}
\newcommand{\Td}{\mathbb{T}^d}
\newcommand{\cE}{\mathcal{E}}
\newcommand{\cL}{\mathcal{L}}
\newcommand{\cP}{\mathcal{P}}
\newcommand{\cW}{\mathcal{W}}
\newcommand{\conv}{\!*\!}
\newcommand{\w}{w}
\newcommand{\emp}{\mu}
\newcommand{\loc}{\rm{loc}}
\newcommand{\vphi}{\varphi}
\begin{document}

\title{A particle system approach to cell-cell adhesion models}
\author{Mikhail Neklyudov, Dario Trevisan}

\address{ Universit\`a degli Studi di Pisa,
 Dipartimento di Matematica,
 Largo Bruno Pontecorvo 5,
 56127 Pisa, Italy}
\email{dario.trevisan@unipi.it, misha.neklyudov@gmail.com}  
\date{\today}

\begin{abstract}
We investigate micro-to-macroscopic derivations in two models of living cells, in presence to cell-cell adhesive interactions. We rigorously address two PDE-based models, one featuring non-local terms and another purely local, as a a result of a law of large numbers for stochastic particle systems,  with moderate interactions in the sense of K.~Oelshchl\"ager~\cite{oelschlager_law_1985}.
\end{abstract}

\maketitle

\section{Introduction}

In mathematical biology, there is a vast and growing literature on modeling collective behavior of individuals (say, cells), moving and actively interacting between each other and with their environment, see e.g.\ the reviews~\cite{byrne_individual-based_2009, lowengrub_nonlinear_2010}. These are based on different mathematical tools, but mainly on (discrete) probabilistic individual based models or (continuous) partial differential equations (or even mixtures of the two). There are of course advantages from both sides, e.g.\ the continuous models are computationally more feasible, but discrete models are possibly richer in details. Usually, continuous models are seen as ``macroscopic'' limit of the discrete ``microscopic'' ones, but rigorous mathematical results can be very challenging~\cite{kipnis_scaling_1999}.


An interesting feature, common in some of the available models, is the presence of adhesive forces, which can describe of cell-cell communication, ultimately responsible for aggregate behaviors such as tissue formation in healthy organisms, but also invasion in case of cancer~\cite{armstrong_continuum_2006, deroulers_modeling_2009}. The main mathematical problem is then to reproduce (and motivate) the appearance of collective behaviors such as formation of clusters or more complex patterns, stemming from (limited range) interactions between individuals~\cite{morale_interacting_2005}. 

\vspace{1em}
In this paper, we address the specific problem of obtaining two such recently proposed PDE-based models from individual-based descriptions, via interacting stochastic differential equations on the $d$-dimensional torus $\Td$ (of course, $d \in \{1, 2, 3\}$ in realistic situations). The first model is of \emph{non-local} type, originally introduced in~\cite{armstrong_continuum_2006} and further developed in~\cite{painter_impact_2010, dyson_existence_2010, dyson_non-local_2013, painter_nonlocal_2015}, which in the simplified version that we consider (without extracellular matrix) reads as transport-diffusion PDE
\begin{equation}\label{eq:pde-non-local-intro}
\partial \rho + \div\bra{ \rho\,  b \conv [g(\rho)] } = \Delta \rho, \quad \text{on $[0,T] \times \Td$,}
\end{equation}
where the velocity field is given by the convolution between a fixed vector field $b: \Td \to \R^d$ and a non-linear function of the cell-density $\rho$, i.e., $g(\rho)$. This can be seen as a slight variant of a usual mean-field model, where the velocity would simply read as $b * \rho$: the role of $g$ is to avoid over-concentration, opposing to the aggregating action of the field $b$. 

The second model is purely \emph{local}, and we refer to it as introduced in~\cite{deroulers_modeling_2009}, where also a heuristic derivation via modified exclusion processes on a lattice is proposed. Instead of studying the explicit forms proposed therein (which we partially recover, see Example~\ref{ex:derouler}) we consider general PDE's of the form
\begin{equation} \label{eq:pde-local-intro} \partial_t \rho = \div(\rho \nabla u'(\rho)) + \Delta \rho. \quad \text{on $[0,T]\times \Td$,} \end{equation}
where $u(\rho)$ represents some internal energy, responsible for aggregation. In particular, we allow for non-convex $u$, as long as the total internal energy density $u(z) + z \log(z)$, $z\in (0,\infty)$, satisfies suitable convexity assumptions.

To obtain these two model, we introduce suitably systems of $n$ stochastic differential equations, and  let $n \to \infty$. It is well-known that qualitative differences may arise if the \emph{scaling regime} is such that each individual cell interacts, in average, with many neighbors (mean-field) or only with few ones (strong). In general, the case of interactions with few neighbors, although more realistic in the biological picture, is the one for which  mathematical theory is more challenging and few rigorous results are at disposal, starting from the seminal paper~\cite{varadhan_scaling_1991}.  The approach that we follow here consists in studying interaction whose strength lies ``in between'', also called moderate in~\cite{oelschlager_law_1985}, depending upon a parameter $\beta \in (0,1)$: where $\beta=0$ corresponds to the usual mean field and  $\beta=1$ would correspond to strong interactions. Unfortunately, our results are limited to $\beta \le \frac {d}{d+2}$ and such limitation seems very difficult to overcome in the case of  our main result concerning local models, see Theorem~\ref{thm:main-local}. In case of non-local models, Theorem~\ref{thm:main-non-local} has the same limitation but it seems plausible that some argument e.g.\ from \cite[Theorem 1]{oelschlager_law_1985} may allow to cover up to any $\beta <1$.

Let us briefly describe our proposed systems of particles. First, we fix a  smooth, compactly supported probability density $\w^1$, and introduce its rescaled versions (narrowly converging towards the Dirac measure at $0$)
\[ \w^n(x) = n^\beta \w^1(n^{\beta/d} x).\]
The role of $\w^n$ is to allow for evaluation of non-linear interactions between $n$ particles $(X^i_t)_{i=1}^n$: indeed, in general, it would not clear how to model non-linear functions of the empirical measure $\mu^n = \sum_{i=1}^n \delta_{X^i_t}$, but thanks to our choice of $\w^n$ (which can be interpreted as an ``effective'' profile of a single cell), we  introduce the smooth function
\[ \mu^n_t \conv w^n (x) = \frac 1 n\sum_{i=1}^n w^n(x-X^i_t),\]
and study the system (where $(B^i)_{i=1}^n$ are independent Brownian motions on $\Td$)
\[ dX^{i}_t = b_t * [g(\emp^n_t * \w^n )] (X^{i}_t) dt + \sqrt{2} d B^i_t, \quad \text{for $t \in[0,T]$,  and $i \in \cur{1, \ldots, n}$,}\]
to obtain in the limit~\eqref{eq:pde-non-local-intro}, and the system
\[ dX^{i}_t = - \nabla  \w^n * [u'(\mu^n_t\conv \w^n )] (X^{i}_t) dt + \sqrt{2} d B^i_t, \quad \text{for $t \in[0,T]$,  and $i \in \cur{1, \ldots, n}$,}\]
to find in the limit~\eqref{eq:pde-non-local-intro}. In the latter system, the drift term may appear odd at first, but it turns out that a similar expression for the drift was already introduced (for similar purposes, but essentially applied to mean-field type interactions) in~\cite{figalli_convergence_2008}, with $u(z)=z^m$, and it generalizes the original form for $p=2$ in~\cite{oelschlager_law_1985}. This expression has the ``right'' structure to allow for variational interpretations (as gradient flows in the space of probability measures, see Section~\ref{sec:local}). Intuitively, if we think  of $\w^n$ as the profile of a single cell, we may interpret the term $- \nabla  \w^n * [u'(\mu^n_t\conv \w^n )] = \w^n *[ - \nabla u'(\mu^n_t\conv \w^n )]$ as an average over such profile  of the velocity field associated to (minus) the gradient of $u'(\mu^n_t\conv \w^n)$.

On a technical side, our approach relies on suitable energy estimates, tightness and identification of the limit points (via application of uniqueness results). In this classical scheme for stability problems, the crucial element of novelty that we introduce, when compared with the moderate interaction limits in~\cite{oelschlager_law_1985} or subsequent developments~\cite{oelschlager_large_1990, morale_interacting_2005} is to look for energy estimates involving the Shannon entropy $\int_{\Td} \rho(x)\log \rho(x) dx $ in place of the $L^2$-norm $\int_{\Td} \rho(x)^2 dx$. The motivation of such substitution stems from recent approaches to non-linear PDE's as gradient flows in the space of probability measures~\cite{ambrosio_gradient_2008}, although here we do not employ sophisticated tools from that theory. It would be very interesting indeed to push forward our arguments, to deal with deterministic equations, instead of stochastic ones. Already in the ``easier'' non-local case, one would derive similar conclusion for interacting ODE's: when compared with other results in the literature of crowd motion, e.g.\ the very recent \cite{goatin_traffic_2015}, the problem here is that the non-linearity $g(\rho)$ acts before the convolution.

Besides the case of degenerate diffusions and the case of strong interactions $\beta=1$, the inclusion of growth of cells (via random duplication) and the extracellular matrix, as in~\cite{armstrong_continuum_2006}, are left open here.

After large parts of this work were completed, we became aware that the group \cite{buttenschon_poster_2015} is also proposing an independent and alternative microscopic derivation of the non-local model \eqref{eq:pde-non-local-intro}, using jump processes.
\vspace{1em}

Our paper is structured as follows: in Section~\ref{sec:general-results}, we introduce the notation and provide some general results on particle systems, following from straightforward applications of It\^o calculus; in Section~\ref{sec:non-local}, we study the non-local model~\eqref{eq:pde-non-local-intro} and in Section~\ref{sec:local} the model~\eqref{eq:pde-local-intro}; in Section~\ref{sec:appendix}, we prove some general results for which we were unable to find a quick reference, in particular for the tightness criterion Proposition~\ref{prop:tightness-general}.

\section*{Acknowledgments} Both authors would like to thank F.~Flandoli, C.~Olivera, M.~Leimbach and M.~Coghi for many useful discussions on this and related subjects. The second author is member of the GNAMPA group (INdAM).

\bibliographystyle{amsplain}

\section{Notation and basic results}\label{sec:general-results}

In this section, we introduce some notation and establish general inequalities regarding systems consisting of $n \ge 1$ (coupled) It\^o SDE's, taking values in $\Td = [-1/2, 1/2]^d$ (with periodic boundary), of the form
\begin{equation}\label{eq:sdes-general} dX^i_t = h_t(X^i) +  \sigma_t(X^i) dB^i, \quad \text{for $i \in \cur{1, \ldots, n}$, $t \in [0,T]$,}\end{equation}
where $h: \Omega \times [0,T]\times \Td \to \R^d$ and $\sigma: \Omega \times [0,T]\times \Td \to \R^{d \times d}$ are  progressively measurable and uniformly bounded maps and $(B^1, \ldots, B^n)$ are independent $\Td$-valued Brownian motions on a probability space $(\Omega, \mathcal{A}, \Prob)$, endowed with a filtration $(\mathcal{F}_t)_{t \in [0,T]}$, satisfying the usual assumptions.

In particular, we are interested in estimates for the stochastic process of empirical measures
\[ \mu = (\mu_t)_{t \in [0,T]} \in C([0,T];\cP(\Td)), \quad \text{where} \quad \mu_t :=\frac 1 n \sum_{i=1}^n \delta_{X^i_t},\]
and a mollified version of it, via a  function (fixed, in this section) $\w \in C^2(\Td; [0,\infty))$, such that $\int w =1$ and $w(x) = w(-x)$. 
 We let in this section 
\[ (\tilde{\mu}_t)_{t \in [0,T]} := (\mu_t \conv \w)_{t \in [0,T]} \in  C([0,T];\cP(\Td)).\]
Since $\tilde{\mu}_t$ has a (continuously) twice differentiable density with respect to the Lebesgue measure on $\Td$, we use the same notation for such density, which can be also expressed as
\begin{equation}\label{eq:density} \tilde{\mu}_t(x) = \frac 1 n \sum_{i=1}^n w(X^i_t-x), \quad \text{for every $x \in \Td$, $t \in [0,T]$.} \end{equation}
Let us notice that the assumption $\w \in C^2(\Td, [0,\infty))$ entails that  ${\w}^{1/2}$ is Lipschitz (a well-known fact, e.g. \cite[Lemma 3.2.3]{stroock_multidimensional_2006}) hence for some constant $c\ge 0$ one has
\begin{equation}\label{eqn:Wass_1}
|\nabla \w|^2(x)\leq c \w(x), \quad \text{for every $x \in \Td$.} 
\end{equation}
Moreover, $\w$ is bounded, hence $\tilde{\mu}$ is uniformly bounded on $[0,T]\times \Td$: in later sections, such bound will be seen to degenerate as $n \to \infty$, but in this section we use this fact only occasionally (and implicitly), to ensure that some objects are proper martingales (and not local ones).

%

Given $\vphi \in C^2(\Td)$, by applying It\^o formula to the composition of the function \[(x^1, \ldots, x^n)  \mapsto \frac{1}{n} \sum_{i=1}^n \vphi(x^i)\] with the $(\Td)^n$-valued process $(X^i, \ldots, X^n)$, it follows the a.s.\ identity, for every $t \in [0,T]$,
\begin{equation} \label{eq:martingale-problem-general} \int \vphi \mu_t - \int \vphi \mu_0 - \int_0^t  \sqa{ h_s \cdot \nabla \vphi + \frac 1 2 (\sigma_s \sigma^*_s) : \nabla^2 \vphi} \mu_s = \sum_{i=1}^n \int_0^t \nabla \vphi (X^i_t)  \cdot \sigma (X^i_t) dB_s^i,\end{equation}
and we recognize that the right hand side is a continuous martingale $M\vphi$, with quadratic variation process
\begin{equation}\label{eq:martingale-quadratic-variation-general}
[M\vphi]_t= \frac 1 {n^2} \sum_{i=1}^n \int_0^t \abs{ \sigma \nabla \vphi}^2(X^i_s) ds = \frac 1 n\int_0^t \int \abs{ \sigma \nabla \vphi }^2 \mu_s ds.
\end{equation}

In a more compact way, we may say that $\mu$ is a weak solution to the stochastic Fokker-Planck equation
\begin{equation}\label{eq:st-ce-general}
d \mu_t = \cL^* \mu_t dt + dM_t, \quad \text{on $(0,T)\times \T^d$,}
\end{equation}
in duality with $\vphi \in C^2(\Td)$, with the (random) Borel time-dependent Kolmogorov operator, defined on $C^2(\Td)$,
\begin{equation}\label{eq:kolmogorov} \cL_t \vphi (x) := h_t(x) \cdot \nabla \vphi(x) +  \frac 1 2 \sigma_t(x) \sigma^*_t(x): \nabla^2 \vphi(x), \quad \text{for $x\in \Td$, $t \in [0,T]$.}\end{equation}
To be more precise, we should specify that $M_t$ is a distributional-valued martingale, null at $t =0$, with quadratic variation process defined in~\eqref{eq:martingale-quadratic-variation-general}.

It\^o formula gives also the following result, for non-linear transformations of $\tilde{\mu}$.

\begin{proposition}[energy identity]\label{prop:energy-identity-general}
Let $(X^i, \ldots, X^n)$ be as in~\eqref{eq:sdes-general} and $F\in C^2(\R)$. Then, the process
\begin{equation}\label{eq:energy-identity}
 \int F(\tilde{\mu}_t(x)) dx - \int_0^t\int \sqa{ \cL_s ( F'(\tilde{\mu}_s)\conv \w )(y) + \frac {1} {2 n} \int F''(\tilde{\mu}_s(x)) \conv \abs{ \sigma_s(y) \nabla  \w(y-x)}^2 dx } \mu_s(dy) ds
\end{equation}
is a continuous martingale with quadratic variation process
\begin{equation}\label{eq:energy-identity-quadratic-varitaion} t \mapsto \frac 1 n\int_0^t \int \int F'(\tilde{\mu}_s(x) )\abs{ \sigma_s(y) \nabla  \w(y-x)}^2 dx \mu_s(dy) ds.\end{equation}
\end{proposition}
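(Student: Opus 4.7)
The identity is essentially It\^o's formula on $(\Td)^n$, applied to the composition of the (deterministic, $C^2$) function
\begin{equation*}
\Phi(x_1,\ldots,x_n) := \int_{\Td} F\!\bra{\tfrac{1}{n}\sum_{j=1}^n \w(x_j-y)}\, dy
\end{equation*}
with the semimartingale $(X^1,\ldots,X^n)$. By~\eqref{eq:density} one has $\Phi(X^1_t,\ldots,X^n_t) = \int F(\tilde{\mu}_t(x))\,dx$, so the left-hand side of~\eqref{eq:energy-identity} is just the Dynkin-type decomposition of this real-valued semimartingale. Since $F\in C^2$, $\w \in C^2$, and $\tilde{\mu}$ is uniformly bounded for fixed $n$, the hypotheses of It\^o's formula are clearly satisfied.

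\textbf{Computing the derivatives of $\Phi$.} The key point is that differentiation in $x_i$ commutes with convolution in the $y$-variable, giving
\begin{equation*}
\nabla_{x_i}\Phi = \tfrac{1}{n}\, \nabla\bra{F'(\tilde{\mu})\conv \w}(x_i),
\end{equation*}
and, for the Hessian, a clean two-piece identity
\begin{equation*}
\nabla^2_{x_i x_i}\Phi = \tfrac{1}{n^2}\!\int\! F''(\tilde{\mu}(y))\,\nabla \w(x_i-y)\otimes \nabla \w(x_i-y)\, dy + \tfrac{1}{n}\, \nabla^2\bra{F'(\tilde{\mu})\conv \w}(x_i).
\end{equation*}
The independence of $(B^i)_{i=1}^n$ means that mixed-index Hessian entries $\nabla^2_{x_i x_j}\Phi$ ($i\ne j$) are invisible to It\^o's formula, so only $i=j$ contributions enter.

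\textbf{Collecting terms in It\^o's formula.} Substituting these expressions into $d\Phi(X^1_t,\ldots,X^n_t)$, the drift produced by $\nabla_{x_i}\Phi\cdot h_s$ together with the second Hessian piece contracted with $\sigma_s\sigma_s^*$ assembles into $\tfrac{1}{n}\sum_i \cL_s(F'(\tilde{\mu}_s)\conv \w)(X^i_s) = \int \cL_s(F'(\tilde{\mu}_s)\conv \w)\,d\mu_s$, recognizing the Kolmogorov operator~\eqref{eq:kolmogorov}. The first Hessian piece, contracted with $\sigma_s\sigma_s^*$ and summed over $i$, produces exactly the $\tfrac{1}{2n}$-correction appearing in~\eqref{eq:energy-identity}, after rewriting $\tfrac{1}{n}\sum_i (\,\cdot\,)(X^i_s) = \int (\,\cdot\,)\,\mu_s(dy)$. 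The martingale part
\begin{equation*}
M_t := \sum_{i=1}^n \int_0^t \nabla_{x_i}\Phi\cdot \sigma_s(X^i_s)\,dB^i_s
\end{equation*}
then has quadratic variation $\tfrac{1}{n}\int_0^t\int \abs{\sigma_s(y)\nabla(F'(\tilde{\mu}_s)\conv \w)(y)}^2\mu_s(dy)\,ds$, which after expanding the inner gradient by means of $\nabla(F'(\tilde{\mu})\conv \w) = F'(\tilde{\mu})\conv \nabla \w$ yields the form stated in~\eqref{eq:energy-identity-quadratic-varitaion}.

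\textbf{Local to true martingale; main subtlety.} To conclude, boundedness of $\w,\nabla \w,\nabla^2 \w$, of $\sigma,h$ (by assumption), and of $\tilde{\mu}$ on $[0,T]\times \Td$ (for $n$ fixed, as noted after~\eqref{eq:density}) implies that $F'(\tilde{\mu})$ and $F''(\tilde{\mu})$ are uniformly bounded, so both the drift and the quadratic-variation integrand are bounded. This upgrades the local martingale to a genuine continuous $L^2$-martingale. There is no deep obstacle here: the calculation is bookkeeping of first and second partial derivatives of $\Phi$. The only non-obvious feature — and the reason the identity is not a mere restatement of \eqref{eq:martingale-problem-general} with test function $\varphi = F'(\tilde{\mu})\conv \w$ — is the appearance of the $\tfrac{1}{2n}$ second-order ``$F''$-correction'', which captures the extra stochastic fluctuations of the smoothed density $\tilde{\mu}$ beyond those visible at the level of the raw empirical measure.
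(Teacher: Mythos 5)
Your route---It\^o's formula on $(\Td)^n$ applied to $\Phi(x^1,\dots,x^n)=\int F\bigl(\tfrac1n\sum_j \w(x^j-y)\bigr)dy$---is precisely the alternative that the paper mentions in the first line of its proof, before choosing instead to work pointwise in $x$ with the test functions $\varphi^x(y)=\w(x-y)$, apply one-dimensional It\^o to $F(\tilde{\mu}_t(x))$, and integrate over $x$. The two computations are equivalent, and your bookkeeping of $\nabla_{x_i}\Phi$, $\nabla^2_{x_ix_i}\Phi$, the cancellation of mixed Hessian terms by independence of the $B^i$, and the localization argument correctly reproduce the finite-variation part of \eqref{eq:energy-identity}.

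The genuine flaw is the final identification of the quadratic variation. What your computation actually gives is
\begin{equation*}
t\mapsto \frac1n\int_0^t\int \abs{\sigma_s(y)\,\bigl(F'(\tilde{\mu}_s)\conv\nabla\w\bigr)(y)}^2\,\mu_s(dy)\,ds,
\end{equation*}
and this is \emph{not} the expression \eqref{eq:energy-identity-quadratic-varitaion}: your ``expansion of the inner gradient'' silently moves the square through the convolution, i.e.\ it uses $\abs{F'(\tilde{\mu})\conv\nabla\w}^2=F'(\tilde{\mu})\conv\abs{\nabla\w}^2$, which is false (and it also changes the power of $F'$). A quick test: for $F(z)=z$ the process $\int F(\tilde{\mu}_t)\,dx\equiv 1$ is constant, so its quadratic variation must vanish; the display above gives $0$ because $\int\nabla\w(y-x)\,dx=0$, whereas \eqref{eq:energy-identity-quadratic-varitaion} is strictly positive. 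So your computed form is the correct one and the printed \eqref{eq:energy-identity-quadratic-varitaion} is itself a misprint; the paper's own proof contains the parallel slip (it writes the quadratic variation of $\int_0^\cdot F'(\tilde{\mu}_s(x))\,d(M\varphi^x)_s$ with $F'$ unsquared and then integrates these quadratic variations over $x$, which does not yield the quadratic variation of the $x$-integrated martingale), while the later application in Proposition~\ref{prop:energy} uses exactly your convolution-inside-the-square form. The correct conclusion is to stop at the displayed expression and flag the discrepancy with \eqref{eq:energy-identity-quadratic-varitaion}, rather than force agreement through an invalid algebraic identity.
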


\begin{proof}
Although the proof can be seen as a straightforward application of It\^o formula to the continuously twice differentiable function
\[ (x^1, \ldots, x^n) \to \int F\bra{ \frac 1 n \sum_{i=1}^n \w(x^i - x) } dx, \]
we give a derivation from~\eqref{eq:st-ce-general}. Indeed, for every $x \in \Td$, letting $\vphi^x (y) := \w( x -y)$, the process 
\[ \tilde{\mu}_t(x) - \tilde{\mu}_0(x) - \int_0^t \int (\cL_s \vphi^x) \mu_s ds = (M\vphi^x)_t\]
is a continuous martingale, with quadratic variation obtained from~\eqref{eq:martingale-quadratic-variation-general}:
\[ [M\vphi^x]_t = \frac 1 n \int_0^t \int \abs{\sigma_s(y) \nabla \w(x-y)}^2 \mu_s(dy)ds.\]
By It\^o formula, the process
\[ F( \tilde{\mu}_t(x) ) - F( \tilde{\mu}_0(x)) - \int_0^t \sqa{ F'( \tilde{\mu}_s(x) ) \int (\cL_s \vphi^x) \mu_s }ds - \frac {1}{2n} \int F''( \tilde{\mu}_s(x) ) \int \abs{\sigma_s(y) \nabla \w(x-y)}^2 \mu_s(dy)\]
is a martingale, with quadratic variation process
\[ \frac 1 n \int_0^t F'(\tilde{\mu}_s(x)) \int \abs{\sigma_s(y) \nabla \w(x-y)}^2 \mu_s(dy) ds.\]
Integrating over $x \in \Td$, and exchanging integration with respect to $\mu_s$, the thesis follows, since for every bounded Borel function $G:\Td \to \R$, one has
\[\int  \int G(x) \cL_s \vphi^x(y) \mu_s(y) dx = \int \cL_s (G \conv \w)(y) \mu_s(dy),\]
and $x \mapsto F'(\tilde{\mu}_s(x))$ is bounded.
\end{proof}

\begin{remark}\label{rem:diffusion-constant}
In case $\sigma = \lambda Id$ is a constant multiple of the identity matrix,~\eqref{eq:energy-identity} reads as
\begin{equation}\label{eq:energy-identity-identity-matrix}
 \int F(\tilde{\mu}_t(x)) dx - \int_0^t\int \sqa{ \cL_s ( F'(\tilde{\mu}_s)\conv \w )(y) + \frac {\lambda^2} {2 n} \sqa{F''(\tilde{\mu}_s) } \conv \abs{ \nabla  \w}^2(y) } \mu_s(dy) ds
\end{equation}
and the quadratic variation process~\eqref{eq:energy-identity-quadratic-varitaion} is 
\[t \mapsto \frac {\lambda^2} n\int_0^t \int \sqa{F'(\tilde{\mu}_s)}\conv \abs{ \nabla  \w}^2 (y) \mu_s(dy) ds.\]
\end{remark}

From the energy identity~\eqref{eq:energy-identity}, we obtain suitable inequalities, such as the following one.

\begin{proposition}[energy inequality]\label{prop:energy-inequality}
Let $(X^i, \ldots, X^n)$ be as in~\eqref{eq:sdes-general}, with $\sigma = \lambda Id$ for some $\lambda > 0$ and $h \le c$ uniformly, for some constant $c>0$. Then, one has
\[
\sup_{t \in [0,T]} \E\sqa{ \int (\tilde{\mu}_t(x))^2 dx} + \frac{\lambda}{2} \E\sqa{ \int_0^T \int \abs{\nabla \tilde{\mu}_t}^2 dt} \le  2 \cur{\E\sqa{ \int (\tilde{\mu}_0(x))^2 dx} + T \frac {\lambda^2 \norm{ \nabla \w}_2^2} {n} } e^{2cT/\lambda}.
\]
%
\end{proposition}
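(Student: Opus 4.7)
The plan is to apply the energy identity from Proposition~\ref{prop:energy-identity-general} (or its specialization in Remark~\ref{rem:diffusion-constant}) to the quadratic choice $F(z)=z^2$, so that $F'(z)=2z$ and $F''(z)=2$. The process
\[
\int \tilde{\mu}_t(x)^2\,dx - \int \tilde{\mu}_0(x)^2\,dx - \int_0^t\int \bigl[ 2 h_s(y)\cdot\nabla(\tilde{\mu}_s\conv w)(y) + \lambda^2\Delta(\tilde{\mu}_s\conv w)(y)\bigr]\mu_s(dy)\,ds - t\,\frac{\lambda^2\norm{\nabla w}_2^2}{n}
\]
is then a continuous martingale, and boundedness of $\tilde{\mu}$ (hence of $F'(\tilde{\mu}_s)$) ensures it is a true martingale, so that taking expectations eliminates it.

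The next step is to convert the $\mu_s$-integrals into Lebesgue integrals against $\tilde{\mu}_s$. The key observation, exploiting symmetry of $w$, is that for any bounded test function $\phi$ one has $\int(\phi\conv w)(y)\,\mu_s(dy)=\int \phi(z)\tilde{\mu}_s(z)\,dz$. Applied to the Laplacian term, followed by integration by parts on $\Td$,
\[
\int \lambda^2 \Delta(\tilde{\mu}_s\conv w)\,d\mu_s = \lambda^2\int \Delta\tilde{\mu}_s\,\tilde{\mu}_s\,dz = -\lambda^2\int \abs{\nabla\tilde{\mu}_s}^2\,dz.
\]
For the drift term, Fubini's theorem yields
\[
\int 2 h_s(y)\cdot\nabla(\tilde{\mu}_s\conv w)(y)\,\mu_s(dy) = 2\int \nabla\tilde{\mu}_s(z)\cdot \bigl((h_s\mu_s)\conv w\bigr)(z)\,dz,
\]
and the uniform bound $\abs{h_s}\le c$ gives the pointwise estimate $\abs{(h_s\mu_s)\conv w}(z)\le c\,\tilde{\mu}_s(z)$.

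Combining these, Young's inequality in the form $2c\abs{\nabla\tilde{\mu}_s}\,\tilde{\mu}_s\le \tfrac{\lambda^2}{2}\abs{\nabla\tilde{\mu}_s}^2+\tfrac{2c^2}{\lambda^2}\tilde{\mu}_s^2$ lets us absorb half of the dissipation term into the Laplacian contribution, producing
\[
\E\!\int \tilde{\mu}_t^2\,dx + \frac{\lambda^2}{2}\E\!\int_0^t\!\!\int \abs{\nabla\tilde{\mu}_s}^2\,dz\,ds \le \E\!\int \tilde{\mu}_0^2\,dx + \frac{2c^2}{\lambda^2}\int_0^t \E\!\int \tilde{\mu}_s^2\,dz\,ds + T\,\frac{\lambda^2\norm{\nabla w}_2^2}{n}.
\]
Dropping the nonnegative dissipation term and invoking Gronwall's inequality gives the $\sup_t\E\int\tilde{\mu}_t^2$ bound; reinserting it back controls the $\nabla\tilde{\mu}_s$ term, yielding the stated inequality (up to the precise form of the constants and exponent).

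The main technical obstacle is the drift term: because $h_s(y)$ sits outside the convolution, one cannot directly replace $d\mu_s$ with $\tilde{\mu}_s\,dz$. Passing to the form $\nabla\tilde{\mu}_s\cdot ((h_s\mu_s)\conv w)$ via Fubini and estimating by $c\tilde{\mu}_s$ through the pointwise bound on $h_s$ is the crucial manipulation that makes Young's inequality and the subsequent Gronwall argument work.
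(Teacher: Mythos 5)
Your proposal is correct and follows essentially the same route as the paper: apply the energy identity of Proposition~\ref{prop:energy-identity-general} (Remark~\ref{rem:diffusion-constant}) with $F(z)=z^2$, use symmetry of $\w$ to turn the Laplacian term into $-\int|\nabla\tilde\mu_s|^2$, bound the drift contribution by $2c\int|\nabla\tilde\mu_s|\,\tilde\mu_s$, and conclude via Young's inequality and Gronwall. Your handling of the drift (Fubini onto $(h_s\mu_s)\conv\w\le c\,\tilde\mu_s$) is only cosmetically different from the paper's (bounding $|h|\le c$ first, then swapping the convolution onto $\mu_s$), and your bookkeeping of the $\lambda$ powers is in fact cleaner than the paper's.
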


\begin{proof}
We consider~\eqref{eq:energy-identity-identity-matrix} with $F(z) = z^2$, so that $F'(z) = 2z$, $F''(z) = 2$ and
\begin{equation*}\begin{split} \cL_s (F'(\tilde{\mu}_s \conv \w) (y) & = 2 h(y) \cdot \nabla (\tilde{\mu}_s \conv \w) (y) + \lambda \Delta (\tilde{\mu}_s \conv \w)(y) \\
& = 2 h(y) \cdot  \sqa{(\nabla\tilde{\mu}_s) \conv \w} (y) + \lambda (\Delta \tilde{\mu}_s)  \conv \w (y),
\end{split}
\end{equation*}
thus we estimate from above, for $s \in [0,T]$, and $\alpha>0$,
\begin{equation*}\begin{split}    \int [2 h_s(y) \cdot  \sqa{(\nabla\tilde{\mu}_s) \conv \w} (y) + & \lambda (\Delta \tilde{\mu}_s)  \conv \w (y)] \mu_s(dy) \le  \\ & \le 2c  \int \abs{(\nabla\tilde{\mu}_s) \conv \w} \mu_s - \lambda\int \abs{\nabla \tilde{\mu}_s}^2 \\
&\le 2 c \int \abs{\nabla\tilde{\mu}_s} \tilde{\mu}_s-\lambda \int \abs{\nabla \tilde{\mu}_s}^2 \\
&\le  \frac{2c}{\lambda}  \int (\tilde{\mu}_s)^2  -\frac \lambda 2  \int \abs{\nabla \tilde{\mu}_s}^2 \\
\end{split}.
\end{equation*}
where in the last inequality we split $2\abs{\nabla\tilde{\mu}_s} \tilde{\mu}_s \le \alpha(\tilde{\mu}_s)^2 + \alpha^{-1}\abs{\nabla\tilde{\mu}_s}^2$, for $\alpha = 2c/\lambda$. One also has
\[ \frac {\lambda^2}{2n} \int 2 \conv \abs{ \nabla  \w}^2(y) \mu_s(dy)  \le  \frac {\lambda^2 \norm{ \nabla \w}_2^2} {2n},\]
hence taking expectation we obtain, for $t \in [0,T]$,
\[ \E\sqa{ \int (\tilde{\mu}_t(x))^2 dx} + \frac{\lambda}{2} \E\sqa{ \int_0^t \int \abs{\nabla \tilde{\mu}_t}^2 dt} \le  \frac{2c}{\lambda} \E\sqa{ \int (\tilde{\mu}_s)^2}  + \E\sqa{ \int (\tilde{\mu}_0(x))^2 dx} + t\frac {\lambda^2 \norm{ \nabla \w}_2^2} {2n}, \]
and by Gronwall inequality we deduce the thesis.
\end{proof}

\section{A non-local model as limit of moderately interacting SDE's}\label{sec:non-local}

In this section, we study convergence as $n \to \infty$, for the empirical measures associated to the system of It\^o SDE's~\eqref{eq:sdes-general}, when we suitably choose both $\w=\w^n$ and $h=h^n$ depending upon $n$ (we also let $\sigma = \sqrt 2 Id$). We fix throughout $\beta >0$ with $\beta \le \frac{d}{d+2}$ and define for $n \ge 1$,
\begin{equation}\label{eq:wn}  \w^n(x) = n^\beta \w(n^{\beta/d} x), \quad \text{ for $x \in \Td$,} 
\end{equation}
 where
\begin{equation}\label{eq:w} \text{  $\w \in C^2(\R; [0,\infty))$ is supported on $(-1/2, 1/2)^d$, $\int_\R w =1$ and $w(x) = w(-x)$, for $x \in \R$.}
\end{equation} The definition is well-posed via the identification $\Td = [-1/2, 1/2]^d$ and one has $\int \w^n = 1$, $w^n(x) = w^n(-x)$  for every $x \in \Td$, for every $n \ge 1$. Moreover,~\eqref{eqn:Wass_1} reads as
\begin{equation}\label{eqn:Wass_2}
|\nabla \w^n|^2(x)\leq c n^{\beta(2/d+1)} \w^n(x), \quad \text{for $x \in \Td$,} 
\end{equation}
where $c>0$ is some absolute constant (not depending upon $n\ge 1$). The coefficient $n^\beta$ entails that $\w^n$ is a probability distribution on $\Td$ and, for $p \in [1, \infty]$,
\[ \norm{\w^n}_p = n^{\beta (p-1)/p} \norm{\w}_p.\]

Next, we let $h = h^n$ be random and depend upon the empirical law $\mu$ in the following way: we fix a Lipschitz function $g: [0,\infty) \to \R$ and a (possibly time dependent) uniformly bounded Borel vector field  $b: [0,T]\times \Td \to \R^d$ be and we let 
\[ h^n_t (x) = b_t \conv \sqa{ g(\mu_t \conv \w^n) } (x), \quad \text{for $t\in [0,T]$, $x \in\Td$.}\]
In a more rigorous formulation, we are interested in solutions to the system of SDE's 
\begin{equation}\label{eq:sdes-non-local} dX^{n,i}_t = b_t * [g(\emp^n_t * \w^n )] (X^{n,i}_t) dt + \sqrt{2} d B^i_t, \quad \text{for $t \in[0,T]$,  and $i \in \cur{1, \ldots, n}$,}\end{equation}
with $\mu_t^n = \frac 1 n \sum_{i=1}^n \delta_{X^{n,i}_t} \in \cP(\Td)$. Of course, for every $n\ge1$, there are no well-posedness issues, recalling identity~\eqref{eq:density}.

We are interested in the convergence of $\mu^n$ as $n \to \infty$ to weak solutions $\rho \in C([0,T]; \cP(\Td))$ to the non-linear and non-local PDE
\begin{equation}\label{eq:pde-non-local}
\partial \rho + \div\bra{ \rho b \conv [g(\rho)] } = \Delta \rho, \quad \text{on $[0,T] \times \Td$.}
\end{equation}
Solutions $\rho$ are understood in duality with functions $\varphi \in C^2(\Td)$, i.e.
Well-posedness for such equations is studied e.g.\ in~\cite{dyson_non-local_2013}: one has the following result. 

\begin{theorem}[well-posedness, non-local case]\label{thm:well-posedness-non-local}
For every $\bar{\rho} \in \cP(\Td) \cap L^2(\Td)$, there exists a unique weak solution $\rho \in C([0,T]; \cP(\Td)) \cap L^2([0,T]; W^{1,2}(\Td))$ to~\eqref{eq:pde-non-local}, with $\rho_0 = \bar{\rho}$.
\end{theorem}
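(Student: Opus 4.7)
The plan is to treat uniqueness and existence separately. For uniqueness, given two weak solutions $\rho_1,\rho_2 \in C([0,T];\cP(\Td))\cap L^2([0,T];W^{1,2}(\Td))$ with common initial datum, I set $w=\rho_1-\rho_2$ and $v_i = b \conv g(\rho_i)$, and use the decomposition $\rho_1 v_1-\rho_2 v_2 = w\,v_1 + \rho_2(v_1-v_2)$ to obtain
\[ \partial_t w - \Delta w + \div(w\,v_1) + \div\bra{\rho_2(v_1-v_2)} = 0. \]
Lions's embedding $L^2_t W^{1,2}_x \cap H^1_t W^{-1,2}_x \hookrightarrow C_t L^2_x$ (applicable after checking $\partial_t \rho_i \in L^2_t W^{-1,2}_x$ directly from the equation, since $b\conv g(\rho_i) \in L^\infty_{t,x}$ and $\rho_i \in L^2_t W^{1,2}_x$) ensures $\rho_i \in L^\infty_t L^2_x$. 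Testing against $w$ and integrating by parts yields
\[ \tfrac 1 2 \tfrac{d}{dt}\|w\|_2^2 + \|\nabla w\|_2^2 = -\tfrac 1 2 \int w^2\,\div v_1 + \int \rho_2\,(v_1-v_2)\cdot \nabla w. \]
Via the distributional identity $\div v_i = b \conv (g'(\rho_i)\nabla\rho_i)$, Young's inequality for convolution produces $\|\div v_1\|_\infty \leq \|b\|_\infty \mathrm{Lip}(g)\|\nabla\rho_1\|_2$ and $\|v_1-v_2\|_\infty \leq \|b\|_\infty \mathrm{Lip}(g)\|w\|_1 \leq C\|w\|_2$ (using $|\Td|=1$). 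Absorbing $\|\nabla w\|_2^2$ on the left and applying Gronwall's lemma with a coefficient in $L^1(0,T)$ forces $w\equiv 0$.

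For existence, I would mollify $b$ to a smooth $b^k$ (preserving the $L^\infty$ bound) and $\bar\rho$ to a smooth nonnegative $\bar\rho^k \to \bar\rho$ in $L^2$. For each $k$, a short-time Banach fixed-point on the map $\tilde\rho \mapsto \rho$, where $\rho$ solves the \emph{linear} drift-diffusion PDE with smooth drift $b^k \conv g(\tilde\rho)$, combined with classical parabolic regularity (to iterate the construction up to $T$) and the maximum principle (to preserve nonnegativity and unit mass), produces a smooth $\rho^k \geq 0$ with $\int \rho^k_t = 1$. Testing with $\rho^k$ and using the uniform bound $\|b^k \conv g(\rho^k)\|_\infty \leq \|b\|_\infty(\abs{g(0)} + \mathrm{Lip}(g))$ gives
\[ \sup_{t \in [0,T]} \|\rho^k_t\|_2^2 + \int_0^T \|\nabla \rho^k_t\|_2^2\, dt \leq C(\|\bar\rho\|_2). \]
A negative-Sobolev estimate on $\partial_t \rho^k$ from the equation allows an Aubin--Lions compactness argument; strong $L^2_{t,x}$ convergence of a subsequence to some limit $\rho$ lets one pass to the limit in the nonlinear drift via continuity of $g$ and Young's inequality, identifying $\rho$ as a weak solution of~\eqref{eq:pde-non-local}.

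The step I expect to be most delicate is the chain-rule identity for $\div(b \conv g(\rho))$ when $b$ is only bounded Borel: although formally $\partial_i(b \conv f) = b\conv \partial_i f$, one must justify this rigorously by smoothing $\rho$ in $W^{1,2}$, invoking that $g'$ is bounded almost everywhere (by Lipschitzness) and that convolution with $b \in L^\infty$ is continuous from $L^1$ to $L^\infty$. Once that identity is in place, the remaining bookkeeping is routine Young-inequality and Gronwall work.
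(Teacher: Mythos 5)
Your proposal is correct, and it follows the same overall energy-method skeleton as the paper (the paper, in fact, only proves uniqueness in detail and defers existence to standard arguments and the reference on well-posedness), but your treatment of the transport term in the uniqueness step is genuinely different. You use the same decomposition $\rho_1 v_1-\rho_2 v_2 = w\,v_1+\rho_2(v_1-v_2)$ with $v_i=b\conv g(\rho_i)$, but then you antisymmetrize, writing $\int w\,v_1\cdot\nabla w = -\tfrac12\int w^2\,\div v_1$, which forces you to make sense of $\div v_1$ via the commutation/chain-rule identity $\div\bra{b\conv g(\rho_1)}=b\conv\bra{g'(\rho_1)\nabla\rho_1}$ (correctly flagged as the delicate point, and justifiable exactly as you indicate, since $g$ is Lipschitz, $\rho_1\in L^2_tW^{1,2}_x$ and $b\in L^\infty$); your Gronwall coefficient $\norm{b}_\infty\operatorname{Lip}(g)\norm{\nabla\rho_1(t)}_2+C\norm{\rho_2(t)}_2^2$ is then integrable on $[0,T]$ precisely because solutions lie in $L^2_tW^{1,2}_x$. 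The paper instead never differentiates the velocity: it keeps $\int\nabla w\cdot\sqa{\rho_1v_1-\rho_2v_2}-\int\abs{\nabla w}^2\le \tfrac14\int\abs{\rho_1v_1-\rho_2v_2}^2$ and closes with $\norm{v_1}_\infty$ and $\norm{v_1-v_2}_\infty\le\norm{b}_\infty\operatorname{Lip}(g)\norm{w}_2$, so its Gronwall coefficient involves only $\norm{\rho_2(t)}_2^2$; this is slightly more elementary, avoiding the chain-rule issue altogether, while your route has the merit of making explicit (via Lions's embedding $L^2_tW^{1,2}_x\cap H^1_tW^{-1,2}_x\hookrightarrow C_tL^2_x$) the regularity needed to legitimize testing with $w$, a point the paper passes over with ``we omit some details.'' Your existence sketch (mollified drift and datum, fixed point for the linearized equation, maximum principle, uniform $L^\infty_tL^2_x\cap L^2_tW^{1,2}_x$ energy bound, Aubin--Lions compactness, passage to the limit in the nonlinearity) is a standard and adequate scheme for what the paper does not spell out at all; the only bookkeeping to add there is the splitting $b^k\conv g(\rho^k)-b\conv g(\rho)=(b^k-b)\conv g(\rho^k)+b\conv\bra{g(\rho^k)-g(\rho)}$ together with $b^k\to b$ in $L^1$, which makes the limit passage in the drift explicit.
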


\begin{proof}
The proof is based on a standard energy estimates, together with applications of Young convolution inequality, so we omit some details, to show that solutions are sufficiently smooth so that the following computation is rigorous. In particular, we focus on uniqueness (which is the part that we need for our study of convergence). Let $\rho$, $\tilde{\rho} \in C([0,T]; \cP(\Td)) \cap L^2([0,T]; W^{1,2}(\Td))$ be two solutions, and consider their difference $\rho- \tilde\rho$. Then,
\[ t \mapsto \frac 1 2\norm{\rho_t -\tilde \rho_t}_2^2 = \frac 1 2 \int (\rho_t(x)- \tilde \rho_t(x))^2 (x) dx\]
is absolutely continuous, with weak derivative
\begin{equation*}\begin{split} \partial_t\frac 1 2 \norm{\rho_t -\tilde\rho_t}_2^2 
& =  \int \nabla \bra{\rho_t- \tilde\rho_t} \cdot \sqa{ \rho_t b_t \conv [g(\rho_t)] - \tilde\rho b_t \conv [g(\tilde\rho_t)] } - \int  \abs{\nabla \bra{ \rho_t- \tilde\rho_t} }^2\\
& \le \frac 1 4 \int \abs{ \rho_t b_t \conv [g(\rho_t)] - \tilde{\rho} b_t \conv [g(\tilde{\rho}_t)] }^2 \\
& \le \frac { \norm{b_t \conv g(\rho_t) }_\infty}{4}  \norm{ \rho_t - \tilde{\rho}_t}^2_2 + \norm{\tilde{\rho}_t}_2^2 \norm{ b_t \conv \sqa{g(\rho_t)-g(\tilde{\rho}_t)} }^2_\infty \\
&\le  c^2 \norm{b_t}_\infty^2   \norm{ \rho_t - \tilde{\rho}_t}^2_2  + \bra{ \operatorname{Lip}g}^2  \norm{\tilde{\rho}_t}_2^2 \norm{ b_t}_\infty^2  \norm{ \rho_t - \tilde{\rho}_t }^2_1\\
 &\le  \sqa{ c^2   +  \bra{ \operatorname{Lip}g}^2  \norm{\tilde{\rho}_t}_2^2 } \norm{b_t}_\infty^2\norm{\rho_t - \rho_t^2 }_2^2,
\end{split}\end{equation*}
where $c \ge 0$ is some constant such that $g(z) \le c(1+ z)$, for $z \in [0,\infty)$. By Gronwall lemma
\[ \sup_{t \in [0,T]} \norm{\rho_t - \tilde \rho_t}_2^2 \le \exp\cur{ \bra{T c^2   + \bra{\operatorname{Lip}g }^2  \norm{\tilde{\rho}}_{L^2_t(L^2_x)}^2 } \norm{b}_{L^\infty_t(L^\infty_x)}^2 }\norm{\rho_0 - \tilde \rho_0}_2^2,\]
from which uniqueness follows.
\end{proof}

\begin{theorem}[convergence, non-local case]\label{thm:main-non-local}
Fix $0<\beta \le \frac {d}{d+2}$ and let $\w$, $g$, $b$ be as above. For $n \ge 1$, let  $\w^n$ as in~\eqref{eqn:Wass_2} and let $(X^n_t)_{t\in [0,T]}$ be a $(\Td)^n$-valued process satisfying~\eqref{eq:sdes-non-local}, with $\mu^n_t:= \frac 1 n \sum_{i=1}^n \delta_{X^{n,i}_t} \in \cP(\Td)$.

If the random variables $(\mu^n_0)_n \subseteq \cP(\Td)$ converge in law towards some (random) $\bar{\mu} \in \cP(\Td)$ , with
\begin{equation}
\label{eq:well-prepared-initial-datum-non-local}
 \limsup_{n \to \infty } \E\sqa{ \int (\mu^n_0 \conv \w^n (x))^2 dx} < \infty, \end{equation}
then $(\mu^n)_{n} \in C([0,T]; \cP(\Td))$ converge in law towards the (uniquely determined in law) random variable $\mu \in C([0,T]; \cP(\Td))$  such that $\mu_0 = \bar{\mu}$ and is a.s.\ concentrated on the distributional solutions to~\eqref{eq:pde-non-local} in the class $C([0,T]; \Td) \cap L^2([0,T]; W^{1,2}(\Td))$.
\end{theorem}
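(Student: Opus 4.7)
The plan is to follow the classical three-step scheme for limits of moderately interacting particle systems: first, establish tightness of $(\mu^n)$ in $C([0,T];\cP(\Td))$ together with uniform $L^2$ moment bounds for the mollified empirical measures $\tilde\mu^n := \mu^n \conv w^n$; second, extract subsequential limits in law and identify them as solutions of~\eqref{eq:pde-non-local}; third, invoke uniqueness via Theorem~\ref{thm:well-posedness-non-local} to conclude convergence of the entire sequence in law. For the uniform bounds, observe first that the drift $h^n_t(x) := b_t \conv g(\tilde\mu^n_t)(x)$ is bounded uniformly in $n,\omega,t,x$: Lipschitzness of $g$ gives $|g(z)|\le c(1+|z|)$, and since $\tilde\mu^n_t$ is a probability density on $\Td$, Fubini yields $\norm{h^n}_\infty \le 2c\norm{b}_\infty$. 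Proposition~\ref{prop:energy-inequality} then applies with $\lambda=\sqrt 2$, and using $\norm{\nabla w^n}_2^2 = n^{\beta(d+2)/d}\norm{\nabla w}_2^2$, the noise correction $T\lambda^2\norm{\nabla w^n}_2^2/n$ remains bounded precisely when $\beta\le d/(d+2)$. Together with~\eqref{eq:well-prepared-initial-datum-non-local} this gives
\[ \sup_n \cur{ \sup_{t\in[0,T]}\E\sqa{\norm{\tilde\mu^n_t}_2^2} + \E\sqa{ \int_0^T \norm{\nabla \tilde\mu^n_t}_2^2 dt } } < \infty,\]
and tightness of $(\mu^n)$ in $C([0,T];\cP(\Td))$ follows from the martingale decomposition \eqref{eq:martingale-problem-general}--\eqref{eq:martingale-quadratic-variation-general} (bounded drift and $O(1/n)$ quadratic variation for every $\vphi\in C^2(\Td)$) combined with Proposition~\ref{prop:tightness-general}.

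For the identification step, the uniform $L^2_tH^1_x$ bound on $\tilde\mu^n$ together with the martingale decomposition yields, via a stochastic Aubin--Lions argument, that $(\tilde\mu^n)$ is also tight in $L^2([0,T];L^2(\Td))$. Combining this with the tightness of $(\mu^n)$ and using Skorokhod representation, we pass to a subsequence along which $(\mu^n, \tilde\mu^n) \to (\mu,\mu)$ almost surely in $C([0,T];\cP(\Td)) \times L^2([0,T];L^2(\Td))$, where the identification of the two limits relies on $w^n\to\delta_0$ narrowly (tested against smooth functions). The Lipschitz regularity of $g$ then gives $g(\tilde\mu^n) \to g(\mu)$ in $L^2_{t,x}$, hence $b_s \conv g(\tilde\mu^n_s) \to b_s \conv g(\mu_s)$ in $L^2_t L^\infty_x$ by Young's inequality. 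Combined with the uniform convergence $\mu^n \to \mu$ in $C([0,T];\cP(\Td))$ and the fact that the martingale remainder in~\eqref{eq:martingale-problem-general} vanishes in $L^2$ since its quadratic variation is $O(\norm{\nabla\vphi}_\infty^2/n)$, we may pass to the limit in every term of \eqref{eq:martingale-problem-general} to conclude that $\mu$ is almost surely a distributional solution of~\eqref{eq:pde-non-local} with $\mu_0 = \bar\mu$, lying in $C([0,T];\cP(\Td)) \cap L^2([0,T];W^{1,2}(\Td))$ thanks to Fatou applied to the uniform bound above.

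For the final step, since Fatou applied to~\eqref{eq:well-prepared-initial-datum-non-local} gives $\bar\mu\in L^2(\Td)$ almost surely, Theorem~\ref{thm:well-posedness-non-local} ensures a unique PDE solution for each realization of the initial datum, so the law of $\mu$ is uniquely determined; as every subsequence of $(\mu^n)$ admits a further subsequence converging in law to this unique limit, the whole sequence converges in law. The main obstacle will be the stochastic Aubin--Lions step: one needs to extract temporal equicontinuity in a negative Sobolev norm of $\tilde\mu^n$ from the bounded-drift-plus-small-noise martingale structure and combine it with the $L^2_tH^1_x$ spatial bound to obtain strong $L^2_{t,x}$-compactness, which is what allows the passage to the limit inside the non-linear term $g(\tilde\mu^n)$. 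Once this compactness is in place, the remaining identifications are routine consequences of the Lipschitz regularity of $g$ and boundedness of $b$.
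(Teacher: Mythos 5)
Your Step 1 is the paper's Step 1 almost verbatim: boundedness of the drift $h^n_t=b_t\conv g(\tilde\mu^n_t)$ via $|g(z)|\le c(1+z)$ and $\norm{\tilde\mu^n_t}_1=1$, tightness from Proposition~\ref{prop:tightness-general}, and the $L^2_t W^{1,2}_x$ bound from Proposition~\ref{prop:energy-inequality} with the scaling $\norm{\nabla\w^n}_2^2/n = O(n^{\beta(d+2)/d-1})$, which is exactly where $\beta\le d/(d+2)$ enters; the final uniqueness/subsequence argument and the $L^2$ bound on $\bar\mu$ by lower semicontinuity also match the paper. Where you genuinely diverge is the identification step: you propose strong $L^2_{t,x}$-compactness of $(\tilde\mu^n)$ via a stochastic Aubin--Lions (Flandoli--Gatarek type) argument plus Skorokhod representation, and then pass to the limit pathwise inside $g$. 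The paper instead never establishes strong compactness: it applies Proposition~\ref{prop:weak-strong}, whose proof smooths with the heat semigroup and trades the missing compactness for the uniform $\E\int_0^T\norm{\nabla\tilde\mu^n_t}_2^2\,dt$ bound together with uniform continuity of the test functional~\eqref{eq:functional-limit-non-local} in the $L^2_{t,x}$ variable; this avoids Skorokhod altogether and works directly with convergence in law. Your route buys a more familiar pathwise limit passage; the paper's buys a shorter argument with no time-regularity estimates beyond what the martingale problem already gives.

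The one substantive gap is precisely the step you flag but do not carry out: the "stochastic Aubin--Lions" tightness of $(\tilde\mu^n)$ in $L^2([0,T];L^2(\Td))$. It is not automatic from the $L^2_\omega L^2_t H^1_x$ bound plus the martingale decomposition as stated: you need (i) higher-moment time-increment estimates, e.g.\ by Burkholder--Davis--Gundy applied to $M^n\vphi$, to get a uniform bound in a fractional space $W^{\alpha,p}([0,T];H^{-s}(\Td))$ with $\alpha p>1$ (the second-moment bound $O(|t-s|/n)$ alone does not give a Kolmogorov-type modulus), and (ii) the remark that convolution with $\w^n$ is a contraction on $H^{-s}$, so the negative-norm time equicontinuity of $\mu^n$ transfers to $\tilde\mu^n$, which is the object carrying the $H^1_x$ bound; only then does the compact embedding $L^2_t H^1_x\cap W^{\alpha,p}_t H^{-s}_x\hookrightarrow\hookrightarrow L^2_tL^2_x$ yield tightness. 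These are standard but must be written out; alternatively you can drop this step entirely and use Proposition~\ref{prop:weak-strong} as the paper does, since your functional and its uniform continuity in the $L^2$ variable (your Young/Lipschitz estimate) are exactly the hypotheses it requires. A further small point to make explicit in your pathwise passage: $b_s\conv g(\rho_s)$ is continuous in $x$ (it is an $L^\infty$--$L^1$ convolution), which is what lets you integrate it against the weakly converging measures $\mu^n_s$.
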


\begin{corollary}[i.i.d.\ initial data, non-local case]\label{coro:stability-independent-non-local}
Fix $0<\beta \le \frac {d}{d+2}$ and let $\w$, $g$, $b$ be as above.  For $n \ge 1$, let  $\w^n$ as in~\eqref{eqn:Wass_2} and let $(X^n_t)_{t\in [0,T]}$ be a $(\Td)^n$-valued process satisfying~\eqref{eq:sdes-non-local}, with $\mu^n_t:= \frac 1 n \sum_{i=1}^n \delta_{X^{n,i}_t} \in \cP(\Td)$, where $(X^{n,i}_0)_{i =1}^n$ are independent, uniformly distributed random variables with law $\bar{\mu} = \bar{\rho}(x) dx \in \cP(\Td) \cap L^2(\Td)$.

Then, $(\mu^n)_{n\ge 1} \in C([0,T]; \cP(\Td))$ converge in probability to the solution $\mu \in C([0,T]; \cP(\Td)) \cap L^2([0,T]; W^{1,2}(\Td))$ to~\eqref{eq:pde-non-local},  with $\mu_0  = \bar{\mu}$.
\end{corollary}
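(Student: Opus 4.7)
The plan is to deduce this corollary as an essentially immediate consequence of Theorem~\ref{thm:main-non-local} combined with Theorem~\ref{thm:well-posedness-non-local}. Two things must be checked: first, that the i.i.d.\ initial data satisfy the hypotheses of the main theorem; second, that the upgrade from convergence in law to convergence in probability is legitimate, which will follow because the limit is deterministic.

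The first hypothesis, convergence of $\mu^n_0$ in law to $\bar\mu$, is the classical law of large numbers for empirical measures of i.i.d.\ samples (Varadarajan's theorem), which in fact yields a.s.\ narrow convergence; in particular, convergence in law holds. For the moment bound \eqref{eq:well-prepared-initial-datum-non-local}, I would expand the square and exploit the independence of $(X^{n,i}_0)_{i=1}^n$:
\begin{equation*}
\E\sqa{\int (\mu^n_0\conv \w^n(x))^2\,dx} = \frac{1}{n^2}\sum_{i,j}\int \E\sqa{\w^n(X^{n,i}_0-x)\w^n(X^{n,j}_0-x)}\,dx.
\end{equation*}
The $n$ diagonal terms, after integrating in $x$, give $\tfrac{1}{n}\norm{\w^n}_2^2 = n^{\beta-1}\norm{\w}_2^2$, which tends to $0$ since $\beta<1$. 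The off-diagonal terms, by independence and symmetry of $\w^n$, produce $\tfrac{n(n-1)}{n^2}\norm{\bar\rho\conv \w^n}_2^2$, and Young's convolution inequality bounds this by $\norm{\bar\rho}_2^2\norm{\w^n}_1^2 = \norm{\bar\rho}_2^2 < \infty$, uniformly in $n$. This establishes the uniform $L^2$ bound.

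Applying Theorem~\ref{thm:main-non-local} then yields $\mu^n \to \mu$ in law in $C([0,T];\cP(\Td))$, where $\mu$ is concentrated on weak solutions to~\eqref{eq:pde-non-local} with $\mu_0 = \bar\mu$. By the well-posedness statement Theorem~\ref{thm:well-posedness-non-local}, such a solution is unique (and belongs to $L^2([0,T];W^{1,2}(\Td))$), so $\mu$ is in fact a deterministic element of $C([0,T];\cP(\Td))$. Since convergence in law to a constant random variable is equivalent to convergence in probability (a standard fact: if $Y_n \to y$ in law with $y$ deterministic, then for every open neighborhood $U$ of $y$, $\Prob(Y_n\notin U)\to 0$), we conclude $\mu^n\to \mu$ in probability.

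There is no real obstacle here; the corollary is a packaging of Theorem~\ref{thm:main-non-local}. The only mildly delicate point is the variance computation for the initial empirical density, but the scaling $\norm{\w^n}_2^2 = n^\beta\norm{\w}_2^2$ combined with the factor $1/n$ makes the diagonal contribution vanish precisely because $\beta<1$ (a condition already assumed throughout the section).
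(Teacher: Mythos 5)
Your proposal is correct and follows essentially the same route as the paper: the paper packages your moment computation into Lemma~\ref{lem:iid-general} (same diagonal/off-diagonal split via independence, with Jensen's inequality where you use Young's, both yielding the bound $\norm{\bar\rho}_2^2$ plus an $O(n^{\beta-1})$ term), then applies Theorem~\ref{thm:main-non-local} and upgrades convergence in law to convergence in probability because the limit is deterministic, exactly as you do.
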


\begin{proof}[Proof of Corollary~\ref{coro:stability-independent-non-local}]
By Lemma~\ref{lem:iid-general}, the sequence $\mu_0^n * \w^n$ satisfies~\eqref{eq:well-prepared-initial-datum-non-local} and it converges in law towards $\bar{\mu}$, which is deterministic. Hence, Theorem~\ref{thm:main-non-local} entails that $\mu^n$ converge in law towards the unique solution to~\eqref{eq:pde-non-local} described in Theorem~\ref{thm:well-posedness-non-local}, with $\mu_0  = \bar{\mu}$. It is then well-known that convergence in law towards a deterministic random variable self-improves to convergence in probability.
\end{proof}

\begin{proof}[Proof of Theorem~\ref{thm:main-non-local}]
The proof follows a standard scheme: first, we show tightness of the sequence of the laws $(\mu^n)_n$; then, we prove that any limit point of  $(\mu^n)_n$ is concentrated on solutions to~\eqref{eq:pde-non-local} for which Theorem~\ref{thm:well-posedness-non-local} applies, hence the sequence in fact converges and the limit is uniquely identified. 

{\noindent \it Step 1} (tightness){\it .} Given the general results in Section~\ref{sec:general-results}, tightness for the law of $(\mu^n)$ follows from Proposition~\ref{prop:tightness-general}, with $h=h^n$ and $\sigma^n = \sqrt 2 Id$, and choosing e.g.\ $c_1= 2$, $c_2=4$. Indeed, the vector field $h^n$ is uniformly bounded, since
\begin{equation}\label{eq:uniformbound-vector-field}
\norm{b_t \conv [g(\tilde{\mu}^n_t)] }_\infty  \le \norm{b_t}_\infty \norm{g(\tilde{\mu}^n_t) }_1  \le c \norm{b_t}_\infty \sqa{ 1 + \norm{\tilde{\mu}^n_t}_1 } = 2 c \norm{b_t}_\infty,\end{equation}
where we use the fact that $g$ is Lipschitz, hence for some constant $c>0$, one has $g(z) \le c(1+ z)$ for $z \in [0,\infty)$. Therefore, the right hand side of~\eqref{eq:general-tightness} is uniformly bounded as $n \to \infty$:
\[ \E\sqa{ \int_0^T \int \bra{|h_t|^{2} + |\sigma|^{4}} d\mu_t dt}  \le \int_0^T 2 c \norm{b_t}_\infty dt   + 4T \]

Up to extracting a subsequence $n_k \to \infty$, we may assume that $\mu^n$ converges in law towards some random variable $\mu$, with values in $C([0,T]; \cP(\Td))$.

{\noindent \it Step 2} (limit){\it .} By Proposition~\ref{prop:energy-inequality}, we have a uniform estimate for $\tilde{\mu}$ in $L^2([0,T]; W^{1,2}(\Td))$, namely
\[ \E\sqa{ \int_0^T \int \abs{\nabla \tilde{\mu}^n_t}^2 dt} \le  2\sqrt{2} \cur{\E\sqa{ \int (\tilde{\mu}^n_0(x))^2 dx} + T \frac {2\norm{ \nabla \w^n}_2^2} {n} } e^{2 \sqrt{2} c \norm{b_t}_\infty T},\]
where we use once again the uniform bound~\eqref{eq:uniformbound-vector-field}, but also~\eqref{eq:well-prepared-initial-datum-non-local} and crucially~\eqref{eqn:Wass_2}, which entails
\[T \frac{ 2\norm{ \nabla \w^n}_2^2}{n} \le T c n^{ \beta(2/d+1) -1} \norm{ \w^n}_1^2 \le  T c n^{ \beta(2/d+1) -1}, \]
hence a uniform bound as $n \to \infty$, because $\beta \le d/(d+2)$.

We are in a position to apply Proposition~\ref{prop:weak-strong}: as a first consequence, the limit random variable $\mu$ admits the representation  $\mu_t(dx) = \rho_t(x)dx$, with $\rho \in L^2([0,T]; W^{1,2}(\Td))$. As a second consequence, we show that $\mu$ is concentrated on weak solutions to~\eqref{eq:pde-non-local}. Indeed, given $\varphi \in C^{2}(\Td)$, $t \in [0,T]$,  we  pass to the limit, as $n \to \infty$, in the identity between (real valued) random variables
\[\int \varphi d \mu_t^n - \int \varphi d \mu_0^n - \int_0^t\int  \sqa{ \bra{\nabla \varphi} \cdot b_s \conv [g(\tilde{\mu}^n_s)]  + (\Delta \varphi) }\mu_s^n  ds = (M^n\varphi)_t,\]
which is the specialization of~\eqref{eq:martingale-problem-general} to this case and $M^n\varphi$ is a martingale null at $0$ with quadratic variation ~\eqref{eq:martingale-quadratic-variation-general}, which reads as
\[ [M^n\varphi]_t = \frac 2 n \int_0^t \int \abs{\nabla \varphi}^2\mu^n_s ds.\]
To obtain in the limit
\[ \int \varphi d \mu_t  - \int \varphi d \mu_0  - \int_0^t\int  \sqa{ \bra{\nabla \varphi} \cdot b_s \conv [g(\rho_s)]  + (\Delta \varphi) }d\mu_s^n  ds = 0,\]
we notice first that the quadratic variation above entails $(M^n\varphi)_t \to 0$ strongly in $L^2(\Prob)$, hence we may focus on the remaining terms. The key remark is that the functional defined on $C([0,T]; \cP(\Td) ) \times L^2( [0,T]; L^2(\Td) )$,
\begin{equation}\label{eq:functional-limit-non-local} (\nu, r) \mapsto \int \varphi d \nu_t - \int \varphi d \nu_0 - \int_0^t\int  \sqa{ \bra{\nabla \varphi} \cdot b_s \conv [g( r_s )]  + (\Delta \varphi) } \nu_s ds\end{equation}
satisfies all the assumptions of Proposition~\ref{prop:weak-strong}: continuity with respect to both variables in $C([0,T]; \cP(\Td) ) \times L^2( [0,T]; L^2(\Td) )$ follows trivially for all terms, except possibly
\[ (\nu, r)  \to  \int   \bra{\nabla \varphi} \cdot b_s \conv [g( r_s )] \nu_s.\]
If $\nu^m \to \nu$ in $C([0,T]; \cP(\Td) )$ and $r^m \to r$ in $L^2( [0,T]; L^2(\Td) )$, we estimate 
\[ \int \bra{\nabla \varphi} \cdot b_s \conv [g( r_s^m )] \nu_s^m -  \int \bra{\nabla \varphi} \cdot b_s \conv [g( r_s )] \nu_s\]
by adding and subtracting $\int \bra{\nabla \varphi} \cdot b_s \conv [g( r_s )] \nu_s^m$, so that
\begin{equation}\begin{split}\label{eq:uniform-continuity} \abs{ \int \bra{\nabla \varphi} \cdot b_s \conv [g( r_s^m )] \nu_s^m - \int \bra{\nabla \varphi} \cdot b_s \conv [g( r_s )] \nu_s^m} & \le \norm{ \nabla \varphi}_{\infty} \norm{ b_s \conv \bra{ g(r_s) - g(r_s^m )} }_\infty\\
& \le \norm{ \nabla \varphi}_{\infty} \norm{b_s}_\infty \operatorname{Lip}(g) \norm{r_s - r_s^m }_2 \to 0
\end{split}\end{equation}
and
\[\int \bra{\nabla \varphi} \cdot b_s \conv [g( r_s )] \nu_s^m - \int \bra{\nabla \varphi} \cdot b_s \conv [g( r_s )] \nu_s \to 0 \]
because the convolution is bounded and continuous. Uniform continuity for $F(\nu, \cdot)$ follows from the same argument which gives~\eqref{eq:uniform-continuity}.

By Proposition~\ref{prop:weak-strong} applied e.g.\ to the composition of~\eqref{eq:functional-limit-non-local} with the absolute value function, we deduce that the identity
\[ \E\sqa{ \abs{\int \varphi d \mu^n_t - \int \varphi d \mu_0^n - \int_0^t\int  \sqa{ \bra{\nabla \varphi} \cdot b_s \conv [g( \tilde{\mu}^n_s )]  + (\Delta \varphi) } \mu_s^n ds } }= \E\sqa{\abs{ (M^n\varphi)_t}} \]
converges as $n \to \infty$ towards
\[\E\sqa{ \abs{\int \varphi d \mu_t - \int \varphi d \mu_0 - \int_0^t\int  \sqa{ \bra{\nabla \varphi} \cdot b_s \conv [g( \tilde{\mu}_s )]  + (\Delta \varphi) } \mu_s ds } } = 0. \] 
Since $\varphi \in C^2(\Td)$ and $t \in [0,T]$ are arbitrary, by a standard density argument we deduce that $\mu_t(dx) = \rho_t(x)dx$ is concentrated on weak solutions to~\eqref{eq:pde-non-local} belonging to $C([0,T]; \cP(\Td)) \cap L^2([0,T]; W^{1,2}(\Td))$, for which Theorem~\ref{thm:well-posedness-non-local} applies.
\end{proof}

\section{A local model as a limit of moderately interacting SDE's}\label{sec:local}

In this section, we study convergence for the empirical measures of the system of It\^o SDE's~\eqref{eq:sdes-general} when $n\to\infty$ and we choose a (sufficiently smooth) $u: [0,\infty) \to \R$ and let
\[ h^n(x) := - \nabla  \w^n * [u'( \tilde{\emp}^n_t)](x), \quad \text{for $x \in \Td$,}\]
where $\w = \w^n$ as in~\eqref{eq:wn}, for some fixed $\beta \in (0,\frac{d}{d+2}]$ and using the notation $\tilde{\mu}^n_t = \mu^n_t \conv \w^n$. We also let $\sigma = \sqrt{2} Id$, as in the previous section.

Explicitly, the system of SDE's reads as
\begin{equation}\label{eq:sdes-local} dX^{n,i}_t = - \nabla  \w^n * [u'(\mu^n_t\conv \w^n )] (X^{n,i}_t) dt + \sqrt{2} d B^i_t, \quad \text{for $t \in[0,T]$,  and $i \in \cur{1, \ldots, n}$,}\end{equation}

Interaction energies of a similar form appear in~\cite{figalli_convergence_2008}, although our main result, Theorem~\ref{thm:main-local} is different in spirit, since it deals with ``moderate interactions'' and ``adhesive'' forces. It could be regarded as a generalization of \cite[Theorem 2]{oelschlager_law_1985} to different types of energies, although our statement does not cover directly that case.

The stochastic Fokker-Planck equation~\eqref{eq:kolmogorov} for $(\mu^n_t)_{t\in [0,T]}$ reads as
\begin{equation}\label{eq:weak-sn-local} d\emp^n_t = \sqa{ \div ( \emp^n_t \nabla \w^n * [u'(\tilde{\emp}^n_t )] ) +  \Delta \emp^n_t} dt + dM_t, \quad \text{on $[0,T]\times \Td$,}\end{equation}
in duality with functions in $C^2(\Td)$, where the quadratic variation of the distributional-valued martingale $M$ is given as in~\eqref{eq:martingale-quadratic-variation-general}.

As $n \to \infty$, since the quadratic variation is infinitesimal, we expect $\emp^n_t(dx) \to \rho_t(x) dx$ in the space $C([0,T]; \cP(\Td))$, where $\rho$ solves the non-linear PDE
\begin{equation}\label{eq:pde-limit} \partial_t \rho = \div(\rho \nabla u'(\rho)) + \Delta \rho. \quad \text{on $[0,T]\times \Td$.} \end{equation}
Theorem~\ref{thm:main-local} provides a rigorous justification of this fact, under suitable assumptions on $u$. Our derivation ultimately relies on the interplay between equivalent formulations of~\eqref{eq:pde-limit}: that of purely diffusion-type
\begin{equation}\label{eq:pde-diffusion} \partial_t \rho = \Delta P(\rho),\end{equation}
where we introduced the ``pressure'' $P(z) = zu'(z) - u(z) +z$, and that of transport-type
\begin{equation}\label{eq:pde-transport} \partial_t \rho = \div ( \rho \nabla F'(\rho) ),\end{equation}
where we introduced the ``internal energy'' $F(z) = u(z) + z \log z$. The formal equivalence between the two can be seen by straightforward calculus. In the latter form~\eqref{eq:pde-transport},  we have at our disposal on more recent uniqueness results, via the theory of gradient flows in the space of probability measures $\cP(\Td)$, rigorously developed in~\cite{ambrosio_gradient_2008}. Indeed, it can be interpreted as the gradient flow of the energy $\cE: \cP(\Td) \to [0,\infty]$, given by
\begin{equation} \label{eq:energy-limit} \cE( \mu ) = \begin{cases} \int u(\rho(x) ) + \rho(x) \log \rho(x) dx &  \text{if $\mu(dx) = \rho(x) dx$,}\\
+\infty & \text{otherwise,}\end{cases}\end{equation}
with respect to the Riemannian-like metric induced by the optimal transport distance on $\cP(\Td)$ with respect to the cost given by the distance squared, i.e.\
\begin{equation}\label{eq:transport} d(\mu, \nu) := \inf_{ \eta \in \Gamma(\mu,\nu) } \bra{ \int_{\Td \times \Td} |x-y|^2 \eta(dx,xy)}^{1/2}\end{equation}
where $\Gamma(\mu, \nu)$ is the set of probability measures $\eta$ on $\Td \times \Td$ with given marginals $(\mu, \nu)$, the relaxed ``transport plans'' in the Kantorovich sense.


A well-posedness result originating from this interpretation is described in \cite[Section 10.4.3]{ambrosio_gradient_2008}: existence of a weak formulation of the gradient flow~\eqref{eq:pde-transport} is ensured if
\begin{equation}\label{eq:F-existence} \begin{split}\text{$F: [0,\infty) \to \R$ is convex, differentiable in $(0,\infty)$, with $F(0) =0$,}\\
\text{ $\lim_{z\to \infty} \frac{F(z)}{z} = \infty$\quad  \quad and \quad \quad $\lim_{z \to 0^+} \frac{F(z)}{z^\alpha} >-\infty$, for some $\alpha > \frac{d}{d+2}$,}
\end{split}\end{equation}
so in particular, the internal energy~\eqref{eq:energy-limit} is lower semicontinuous. Uniqueness then holds if moreover
\begin{equation}\label{eq:F-uniqueness}\text{ $z \mapsto z^{d} F(z^{-d})$ is convex and non increasing on $(0,\infty)$.}\end{equation}

Indeed, \cite[Theorem 11.2.5]{ambrosio_gradient_2008} gives the following well-posedness result (which is even more than what it useful for our present purposes).

\begin{theorem}[well-posedness, local-case]\label{thm:well-posedness}
Let $F$ satisfy~\eqref{eq:F-existence} and~\eqref{eq:F-uniqueness}, set $P(z) := z F'(z) - F(z)$. Then, for every $\bar{\mu} \in \cP(\Td)$, there exists a unique distributional solution to~\eqref{eq:pde-diffusion} among which satisfy
\[ (\mu_t)_{t\in [0,T]} \in C([0,T]; \cP(\Td) ) \cap AC^2_{\loc}((0,T]; \cP(\Td)), \quad \mu_0 = \bar{\mu}\]
and, for every $t \in (0,T]$, one has 	$\mu_t(dx) =\rho_t(x) dx$, with
\[ P(\rho) \in L^1_{\loc}((0,T]; W^{1,1}_{\loc} (\Td) ), \quad  \int_{\Td} \frac{ \abs{\nabla P( \rho_t ) (x) }^2}{ \rho_t(x) }dx  \in  L^1_{\loc}((0,T]).\]
\end{theorem}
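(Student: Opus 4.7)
The plan is to recognize this statement as an instance of the general Wasserstein gradient flow theory developed in \cite{ambrosio_gradient_2008}, reducing directly to \cite[Theorem 11.2.5]{ambrosio_gradient_2008}. The formal identity $\nabla P(\rho) = \rho\, \nabla F'(\rho)$ rewrites \eqref{eq:pde-diffusion} as the continuity equation $\partial_t \rho + \div(\rho v_t) = 0$ with $v_t = -\nabla F'(\rho_t)$, i.e., as the gradient flow in $(\cP(\Td), W_2)$ of the internal energy functional $\cE$ in \eqref{eq:energy-limit}, with $W_2$ being the transport distance \eqref{eq:transport}.

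To apply the AGS result, I would then verify its two hypotheses on $F$. The conditions in \eqref{eq:F-existence} --- convexity, $F(0)=0$, superlinear growth at infinity, and the one-sided bound $F(z) \gtrsim -z^\alpha$ near $0$ with $\alpha > d/(d+2)$ --- are exactly those ensuring that $\cE$ is proper, narrowly lower semicontinuous, and bounded below on $\cP(\Td)$; the critical exponent $d/(d+2)$ enters through a Sobolev-type embedding. The condition \eqref{eq:F-uniqueness}, that $z \mapsto z^d F(z^{-d})$ be convex and non-increasing, is McCann's classical criterion for displacement convexity (with parameter $\lambda=0$) of internal energies along $W_2$-geodesics and generalized geodesics; on the compact torus $\Td$ it is inherited from the Euclidean statement by a standard periodization argument.

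Once these two checks are in place, \cite[Theorem 11.2.5]{ambrosio_gradient_2008} yields a unique locally absolutely continuous curve $(\mu_t)$ in $(\cP(\Td), W_2)$ with $\mu_0 = \bar\mu$ and satisfying the stated regularity $P(\rho) \in L^1_{\loc}((0,T]; W^{1,1}_{\loc}(\Td))$ together with the Fisher-information bound on $|\nabla P(\rho_t)|^2/\rho_t$; this curve solves $\partial_t \rho = \div(\rho \nabla F'(\rho)) = \Delta P(\rho)$ distributionally. The main obstacle I anticipate lies on the uniqueness side: one must verify that any distributional solution lying in the class described in the statement is in fact a curve of maximal slope for $\cE$ in the AGS sense, so that the general uniqueness theorem applies. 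This matching is effected by the energy dissipation identity, which identifies the integrability of $|\nabla P(\rho_t)|^2/\rho_t$ with (the square of) the metric derivative of $t \mapsto \mu_t$ through the chain rule available along displacement-convex functionals; the first part of the argument is essentially verification, while the real content is imported through \eqref{eq:F-uniqueness} via McCann's displacement convexity theorem.
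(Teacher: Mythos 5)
Your reduction to \cite[Theorem 11.2.5]{ambrosio_gradient_2008} is exactly what the paper does: it offers no separate proof, having arranged \eqref{eq:F-existence} and \eqref{eq:F-uniqueness} (taken from \cite[Section 10.4.3]{ambrosio_gradient_2008}) to be precisely the hypotheses of that theorem, which is already stated for distributional solutions of \eqref{eq:pde-diffusion} in the given regularity class. So your proposal is correct and follows essentially the same route, with your remarks on displacement convexity and the torus adaptation being reasonable supplementary checks rather than a departure.
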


The strength of Theorem~\ref{thm:well-posedness} is that it can be applied directly to distributional solutions of diffusion-type~\eqref{eq:pde-diffusion}, i.e.\ those which $\mu_t(dx) =\rho_t(x) dx$, for $t\in (0,T]$, with $P(\rho) \in L^1_{\loc}((0,T];L^1_{\loc} (\Td) )$ and
\[\int_0^T \int \sqa{\partial_t \varphi(t,x) +\Delta \varphi(t,x)} P(\rho_t(x) )dx dt= 0, \quad \text{for every $\varphi \in C^2_c( (0,T)\times \Td )$.}\]

Our target is the case where $u$ may represent an adhesive force, i.e.\ it is not necessary convex, although the total internal energy is still assumed to be convex. Actually, we introduce  the following assumption on $F$, describing the fact that the energy $u$ is controlled by the entropy, i.e.\ the internal energy associated to the Brownian motion:
\begin{equation}\label{eq:F-assumption-2} \text{	there exists $\lambda<1$ such that $
\abs{z u''(z)}\le \lambda  $, for every $z \in (0,\infty)$.}\end{equation}
This condition entails that $z \mapsto F(z)$ is convex, and that $F(z) \le c( z \log z + z + 1)$, for some constant $c>0$ depending upon $\lambda$ only. In particular, for many purposes, we may replace $F(z)$ with $z \log z$.

%

\begin{theorem}[convergence, local case] \label{thm:main-local}
Let $u :[0,\infty) \to \R$, with $u\in C^2[0,\infty)$, define $F(z) = u(z) + z \log z $ and assume that~\eqref{eq:F-existence},~\eqref{eq:F-uniqueness} and~\eqref{eq:F-assumption-2} hold. Fix $0 < \beta \le \frac d {d+2}$, let $\w$ satisfy~\eqref{eq:w} and define $\w^n$, for $n \ge 1$, as in~\eqref{eq:wn}. For $n \ge 1$, let $(X^n_t)_{t\in [0,T]}$ be a $(\Td)^n$-valued process satisfying~\eqref{eq:sdes-local}, with $\mu^n_t:= \frac 1 n \sum_{i=1}^n \delta_{X^{n,i}_t} \in \cP(\Td)$.

If the random variables $(\mu^n_0)_n \subseteq \cP(\Td)$ converge in law towards some $\bar{\mu} \in \cP(\Td)$, with
\[
 \limsup_{n \to \infty } \E\sqa{ \int F (\mu^n_0 \conv \w^n (x)) dx} < \infty, \]
then the random variables $(\mu^n)_{n} \in C([0,T]; \cP(\Td))$ converge in law towards the (uniquely determined in law) random variable $\mu \in C([0,T]; \cP(\Td))$  such that $\mu_0 = \bar{\mu}$ and a.s.\ concentrated on the distributional solutions to~\eqref{eq:pde-diffusion} in the class of Theorem~\ref{thm:well-posedness}.
\end{theorem}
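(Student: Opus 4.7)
The plan follows the three-step scheme used for Theorem~\ref{thm:main-non-local}: derive uniform energy-type estimates for $(\mu^n)_n$, extract tightness via Proposition~\ref{prop:tightness-general}, and identify any subsequential limit as concentrated on distributional solutions to~\eqref{eq:pde-diffusion} in the class of Theorem~\ref{thm:well-posedness}, whose uniqueness then forces the whole sequence to converge. The decisive novelty, as already pointed out in the introduction, is that the $L^2$-type estimate of Proposition~\ref{prop:energy-inequality} must be replaced by an \emph{entropy} estimate based on $F(z)=u(z)+z\log z$ itself, motivated by the gradient-flow structure of~\eqref{eq:pde-transport} and the fact that the particle drift $h^n_s=-\nabla B^n_s$, with $B^n_s:=\w^n\conv u'(\tilde\mu^n_s)$, mirrors the Wasserstein gradient of $F$ at the mollified level.

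The entropy estimate is obtained by applying Proposition~\ref{prop:energy-identity-general} to this $F$. Setting $A^n_s:=\w^n\conv F'(\tilde\mu^n_s)=B^n_s+C^n_s+\mathrm{const}$ with $C^n_s:=\w^n\conv\log\tilde\mu^n_s$, the drift contribution in~\eqref{eq:energy-identity-identity-matrix} splits as
\[ \int h^n_s\cdot\nabla A^n_s\,d\mu^n_s = -\int|\nabla B^n_s|^2\,d\mu^n_s - \int\nabla B^n_s\cdot\nabla C^n_s\,d\mu^n_s, \]
while the Laplacian contribution, after integration by parts, equals $-\int F''(\tilde\mu^n_s)|\nabla\tilde\mu^n_s|^2\,dz$. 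The pointwise Jensen inequality $|\w^n\conv f|^2\le\w^n\conv|f|^2$, combined with~\eqref{eq:F-assumption-2}, yields $\int|\nabla B^n_s|^2\,d\mu^n_s\le\lambda^2\mathcal{I}_n(s)$, $\int|\nabla C^n_s|^2\,d\mu^n_s\le\mathcal{I}_n(s)$, and $F''(z)\ge(1-\lambda)/z$, where $\mathcal{I}_n(s):=\int|\nabla\tilde\mu^n_s|^2/\tilde\mu^n_s\,dz$ denotes the Fisher information. A weighted Young's inequality, tuned to match the perfect-square lower bound $(1+zu''(z))^2\ge(1-\lambda)^2$ of the formal limiting dissipation $\int\rho|\nabla F'(\rho)|^2\,dz$, combines these bounds into $\int\cL_s A^n_s\,d\mu^n_s\le -\gamma\,\mathcal{I}_n(s)$ for some $\gamma=\gamma(\lambda)>0$. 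The It\^o correction $\frac{1}{n}\int F''(\tilde\mu^n_s)\conv|\nabla\w^n|^2\,d\mu^n_s$ stays uniformly bounded thanks to~\eqref{eqn:Wass_2}, $|zu''(z)|\le\lambda$, and the constraint $\beta\le d/(d+2)$, exactly as in the proof of Proposition~\ref{prop:energy-inequality}. Gronwall then yields $\sup_t\E\int F(\tilde\mu^n_t)\,dx + \E\int_0^T\mathcal{I}_n(s)\,ds\le C$, uniformly in $n\ge 1$.

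Tightness of the laws of $(\mu^n)$ on $C([0,T];\cP(\Td))$ now follows from Proposition~\ref{prop:tightness-general} with $c_1=2$, $c_2=4$, because the same Jensen bound also gives $\int|h^n_s|^2\,d\mu^n_s\le\lambda^2\mathcal{I}_n(s)$. Let $\mu$ be a subsequential limit in law. The martingale problem~\eqref{eq:martingale-problem-general} reads, for $\varphi\in C^2(\Td)$,
\[ \int\varphi\,d\mu^n_t-\int\varphi\,d\mu^n_0 - \int_0^t\!\!\int\bigl[\Delta\varphi+\nabla\varphi\cdot h^n_s\bigr]d\mu^n_s\,ds = M^n_t(\varphi), \]
with $M^n_t(\varphi)\to 0$ in $L^2(\Prob)$ by~\eqref{eq:martingale-quadratic-variation-general}. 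By Fubini and the symmetry of $\w^n$, the drift term can be rewritten as $-\int u'(\tilde\mu^n_s(z))\nabla g^n_s(z)\,dz$, where $g^n_s(z):=\int\nabla\varphi(y)\w^n(y-z)\,d\mu^n_s(y)$; a Taylor expansion of $\varphi$ on $\supp\w^n$ gives $g^n_s(z)=\nabla\varphi(z)\tilde\mu^n_s(z)+r^n_s(z)$ with $|r^n_s|\le Cn^{-\beta/d}\tilde\mu^n_s$ uniformly. A further integration by parts and the identity $\rho u''(\rho)\nabla\rho=\nabla P(\rho)-\nabla\rho$, for $P(z):=zu'(z)-u(z)+z$, transform the above into
\[ \int\varphi\,d\mu^n_t-\int\varphi\,d\mu^n_0 - \int_0^t\!\!\int\Delta\varphi\,P(\tilde\mu^n_s)\,dz\,ds = o_n(1) + M^n_t(\varphi), \]
where $o_n(1)\to 0$ in $L^1(\Prob)$, using $\int|\nabla\tilde\mu^n_s|\,dz\le\mathcal{I}_n(s)^{1/2}$ and the analogous smoothing approximation $\int\Delta\varphi\,d\mu^n_s\approx\int\Delta\varphi\,\tilde\mu^n_s\,dz$. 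The Fisher-information bound gives $\sqrt{\tilde\mu^n}$ uniformly bounded in $L^2([0,T];H^1(\Td))$; combined with a negative-Sobolev time-regularity estimate extracted from the stochastic Fokker--Planck equation~\eqref{eq:weak-sn-local}, an Aubin--Lions argument yields, along a further subsequence, strong convergence $\tilde\mu^n\to\rho$ in $L^1([0,T]\times\Td)$, with $\mu_t(dx)=\rho_t(x)\,dx$. Since $P$ is globally Lipschitz ($|P'|\le 1+\lambda$), $P(\tilde\mu^n)\to P(\rho)$ in $L^1$, and the limit satisfies the distributional form of $\partial_t\rho=\Delta P(\rho)$. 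The Fisher-information bound also provides $|\nabla P(\rho)|^2/\rho\in L^1([0,T]\times\Td)$, placing $\rho$ in the uniqueness class of Theorem~\ref{thm:well-posedness}.

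The hardest part is the refined entropy dissipation in the first step: a naive Cauchy--Schwarz combination of the above bounds only gives $\gamma>0$ for $\lambda<\sqrt 2-1$, while the optimal Young's inequality does not seem to reach beyond $\lambda<3/4$; obtaining the full range $\lambda<1$ requires exploiting the perfect-square cancellation that only becomes visible in the unmollified limit $\w^n\to\delta$, through a more delicate decomposition of $\int\nabla B^n_s\cdot\nabla A^n_s\,d\mu^n_s$ than the one sketched here and a careful tracking of the approximate-identity errors introduced by $\w^n$. A secondary technical difficulty is the strong $L^1$-compactness of $\tilde\mu^n$: it cannot be extracted from the Fisher information alone and must be paired with the time-regularity coming from~\eqref{eq:weak-sn-local} to make the Aubin--Lions argument applicable.
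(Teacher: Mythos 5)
Your overall architecture (entropy/Fisher estimate, tightness via Proposition~\ref{prop:tightness-general}, identification of the limit through the Lipschitz pressure $P$, uniqueness from Theorem~\ref{thm:well-posedness}) matches the paper, but there is a genuine gap at the very step you single out as the hardest: your energy dissipation estimate does not cover the range $\lambda<1$ assumed in~\eqref{eq:F-assumption-2}. By applying the energy identity to $\cE^n(\mu^n_t)=\int F(\tilde{\mu}^n_t)$ and splitting $\w^n\conv F'(\tilde\mu^n)$ into the $u'$ and $\log$ parts, you are forced to pay Jensen on \emph{both} gradients in the cross term while only recovering the mollified dissipation $-\int F''(\tilde\mu^n)\abs{\nabla\tilde\mu^n}^2\le-(1-\lambda)\mathcal{I}(\tilde\mu^n)$, and as you compute this closes only for $\lambda<\sqrt{2}-1$ (or some threshold still strictly below $1$). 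The ``more delicate decomposition'' you invoke to reach all $\lambda<1$ is not supplied, so the central estimate~\eqref{eq:energy-estimate-full} — and with it tightness and the regularity needed for Theorem~\ref{thm:well-posedness} — is not proved under the stated hypotheses. The paper avoids the problem by changing the Lyapunov functional: It\^o is applied to the plain entropy $\operatorname{Ent}(\tilde\mu^n_t)=\int\tilde\mu^n_t\log\tilde\mu^n_t$, not to $\cE^n$. Then the Laplacian contribution is \emph{exactly} $-\int\abs{\nabla\tilde\mu^n}^2/\tilde\mu^n=-\mathcal{I}(\tilde\mu^n)$, with no Jensen loss (the convolution transfers onto $\mu^n$ and produces $\tilde\mu^n$), and the only term requiring Cauchy--Schwarz plus Jensen is the single cross term $-\int\ang{\nabla\w^n\conv\log\tilde\mu^n,\nabla\w^n\conv u'(\tilde\mu^n)}\,\mu^n\le\frac{1+\lambda^2}{2}\mathcal{I}(\tilde\mu^n)$, leaving the net dissipation $-\frac{1-\lambda^2}{2}\mathcal{I}(\tilde\mu^n)<0$ for \emph{every} $\lambda<1$; assumption~\eqref{eq:F-assumption-2} then makes $\operatorname{Ent}(\tilde\mu^n)$ and $\cE^n(\mu^n)$ comparable, so~\eqref{eq:energy-estimate-full} follows. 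In other words, the cancellation you were hunting for at the level of $F$ is obtained simply by decoupling the estimate from the lower bound on $F''$ altogether.

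A secondary deviation: for the identification step you propose strong $L^1$ compactness of $\tilde\mu^n$ via a negative-Sobolev time estimate and Aubin--Lions (which, since convergence is only in law, would additionally require a Skorokhod representation). The paper does not need this: Proposition~\ref{prop:weak-strong}, applied with $p=1$ to the functional built from $\Delta\varphi\,P(r)$ and combined with the commutator estimate for $\ang{\nabla\varphi,\nabla\w^n\conv u'(\tilde\mu^n)}$, passes to the limit using only the spatial $W^{1,1}$ bound coming from the Fisher information, the convergence in law in $C([0,T];\cP(\Td))$, and heat-semigroup smoothing — no time-regularity or pathwise compactness argument is required. Your route is plausible in principle but is sketched, and in any case it cannot repair the quantitative gap in the dissipation estimate above.
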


The proof of the following corollary goes along the same lines as Corollary~\ref{coro:stability-independent}.

\begin{corollary}[i.i.d.\ initial data, local case]\label{coro:stability-independent}
Let $u :[0,\infty) \to \R$, define $F(z) = u(z) + z \log z$ and assume that~\eqref{eq:F-existence},~\eqref{eq:F-uniqueness} and ~\eqref{eq:F-assumption-2}  hold. Fix $0 < \beta \le \frac d {d+2}$, let $\w$ satisfy~\eqref{eq:w} and define $\w^n$, for $n \ge 1$, as in~\eqref{eq:wn}. For $n \ge 1$, let $(X^n_t)_{t\in [0,T]}$ be a $(\Td)^n$-valued process satisfying~\eqref{eq:sdes-local}, with $\mu^n_t:= \frac 1 n \sum_{i=1}^n \delta_{X^{n,i}_t} \in \cP(\Td)$, where $(X^{n,i}_0)_{i =1}^n$ are independent, uniformly distributed random variables with law $\bar{\mu} = \bar{\rho}(x) dx \in \cP(\Td) \cap L^2(\Td)$.

Then, the random variables $(\mu^n)_{n} \in C([0,T]; \cP(\Td))$ converge in probability towards the unique solution to~\eqref{eq:pde-diffusion} described in Theorem~\ref{thm:well-posedness}, with $\mu_0  = \bar{\mu}$.
\end{corollary}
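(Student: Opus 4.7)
The plan is to follow the scheme of Corollary~\ref{coro:stability-independent-non-local} line by line. I would verify the two hypotheses of Theorem~\ref{thm:main-local} --- the convergence in law of $\mu^n_0$ to $\bar{\mu}$ and the uniform initial bound $\limsup_n \E\sqa{\int F(\mu^n_0 * \w^n)\,dx} < \infty$ --- and then invoke that theorem to obtain convergence in law of $(\mu^n)_n$ to the solution provided by Theorem~\ref{thm:well-posedness} with initial datum $\bar{\mu}$. Since $\bar{\mu}$ is deterministic and Theorem~\ref{thm:well-posedness} selects a unique such solution, the limit $\mu \in C([0,T];\cP(\Td))$ is itself deterministic, so convergence in law automatically upgrades to convergence in probability, exactly as at the end of Corollary~\ref{coro:stability-independent-non-local}.

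Convergence in law of $\mu^n_0$ to $\bar{\mu}$ is the standard empirical-measure law of large numbers for i.i.d.\ samples, which is precisely the content of Lemma~\ref{lem:iid-general} already used in the non-local case. The substantive point that is genuinely new, compared with the non-local corollary, is the initial bound on $F$ in place of $L^2$. I would reduce the former to the latter using assumption~\eqref{eq:F-assumption-2}, which, as noted immediately after it, entails $F(z) \le c(z\log z + z + 1)$. Combined with the elementary pointwise inequality $z \log z \le z^2$ valid for all $z \ge 0$ (from $\log z \le z-1$ for $z \ge 1$ and $z\log z \le 0$ on $[0,1]$), this yields $F(z) \le c'(z^2 + 1)$, and integrating over the torus
\begin{equation*}
    \int F(\mu^n_0 * \w^n(x))\,dx \le c'\bra{\int (\mu^n_0 * \w^n(x))^2\,dx + 1}.
\end{equation*}

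The problem is thus reduced to the uniform $L^2$ bound $\limsup_n \E\sqa{\int (\mu^n_0 * \w^n)^2\,dx} < \infty$, which is precisely the initial estimate~\eqref{eq:well-prepared-initial-datum-non-local} already verified via Lemma~\ref{lem:iid-general} in the non-local corollary. Alternatively, it can be obtained by a direct computation from $\E[\mu^n_0 * \w^n(x)] = (\bar{\rho} * \w^n)(x)$ and $\mathrm{Var}(\mu^n_0 * \w^n(x)) \le n^{\beta-1}\|\w\|_\infty (\bar{\rho} * \w^n)(x)$, which combine with Young's convolution inequality to give $\E\sqa{\int (\mu^n_0 * \w^n)^2\,dx} \le n^{\beta-1}\|\w\|_\infty + \|\bar{\rho}\|_2^2$, uniformly bounded thanks to $\bar{\rho} \in L^2(\Td)$ and $\beta \le d/(d+2) < 1$. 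There is essentially no serious obstacle at the level of this corollary itself: the hard stability argument has already been absorbed into Theorem~\ref{thm:main-local}, and all that remains is the structural reduction outlined above.
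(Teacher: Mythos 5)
Your proposal is correct and matches the paper's (implicit) argument: the paper simply states that the proof goes along the same lines as the non-local corollary, i.e.\ verify the hypotheses of Theorem~\ref{thm:main-local} via Lemma~\ref{lem:iid-general} and upgrade convergence in law to convergence in probability because the limit is deterministic. Your reduction of the initial bound on $F$ to the $L^2$ bound, using $F(z) \le c(z\log z + z + 1)$ from~\eqref{eq:F-assumption-2} together with $z\log z \le z^2$, is exactly the step the paper leaves implicit, and it is carried out correctly.
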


\begin{example}\label{ex:derouler}
Our investigation is motivated by some PDE's obtained as heuristic limits of discrete models in~\cite{deroulers_modeling_2009}, based on variants of exclusion processes. The energy $u$ therein is a polynomial and they study only densities which are a-priori uniformly bounded by some constant (indeed, in their model, for large densities, the diffusion coefficient becomes negative). Therefore, to recover similar energies, we may consider e.g.\ $u$ such that $u''(z) = c (1-z)^+$, for some $c \in \R$. It is not difficult to check that, if $|c|$ is small enough (depending also on the dimension $d$), conditions~\eqref{eq:F-existence},~\eqref{eq:F-uniqueness} and~\eqref{eq:F-assumption-2} are satisfied by $F(z) = u(z) + z \log z$, hence our result applies.
\end{example}

\begin{remark}
It is evident that our assumption \eqref{eq:F-assumption-2}  excludes the cases of $u$ being a convex polynomial, such as in \cite{oelschlager_law_1985} or \cite{figalli_convergence_2008}. In fact, under such an assumption, it seems possible to slightly modify our proof of Theorem \ref{thm:main-local} to show existence of a limiting law for the sequence $(\mu^n)_{n\ge1}$, concentrated on weak (distributional) solutions to \eqref{eq:pde-diffusion}, but it is presently not clear whether these solutions will have enough regularity so that Theorem \ref{thm:well-posedness} applies. 
\end{remark}



As for Theorem~\ref{thm:main-non-local}, the proof is in two steps, corresponding here respectively to Proposition~\ref{prop:energy} and Proposition~\ref{prop:limit}: first, we show tightness of the of the laws of $(\mu^n)_n$, then, we prove that any limit point of  $(\mu^n)_n$ is concentrated on solutions to~\eqref{eq:pde-diffusion} for which Theorem~\ref{thm:well-posedness} applies, hence the sequence must converge, since the limit is unique. 

The main idea is to investigate the following ``approximation'' of the energy~\eqref{eq:energy-limit},
\[ \cE^n( \mu ) := \int F ( \mu \conv \w^n (x)) dx = \cE(\mu \conv \w^n), \quad \text{for $\mu \in \cP(\Td)$.}\]
Jensen inequality entails $\cE^n (\mu) \le \cE( \mu)$; moreover $\lim_{n \to \infty} \cE^n(\mu) = \cE(\mu)$, since $\mu \mapsto \cE(\mu)$ is lower semicontinuous. However,~\eqref{eq:weak-sn-local} is not the gradient flow of $\cE^n$ with respect to the transport distance~\eqref{eq:transport}: indeed, the equation for the gradient flow of $\cE^n$ reads as
\[ \partial_t \mu_t = \div ( \mu_t  \nabla \w^n \conv [u'(\mu_t \conv \w^n )+ \log(\mu_t \conv \w^n) ] )\]
 differs from~\eqref{eq:weak-sn-local} in two aspects: the absence of martingales (it is deterministic) and the expression
 \[ \div(\mu_t  \nabla \w^n \conv\sqa{\log( \mu_t \conv \w^n)} ) = \div\bra{\mu_t   \w^n \conv \sqa{ \frac{\nabla \mu_t \conv \w^n}{\mu_t \conv \w^n}} }.\]
  in place of $\Delta \mu_t$. 
  
Nevertheless, we are able to deduce an approximate version of the so-called energy dissipation identity (see~\cite{ambrosio_gradient_2008}) for~\eqref{eq:weak-sn-local}, involving the ``squared norm'' of the gradient of $\cE^n$, i.e.\
\[ | \nabla \cE^n|^2 (\mu) := \int \abs{\nabla \w^n \conv F'( \mu \conv \w^n) }^2 \mu, \]
and the Fisher information
\[\mathcal{I}( \mu ) = \begin{cases} 4 \int | \nabla \sqrt{ \rho (x) } |^2 dx &  \text{if $\mu(dx) = \rho(x) dx$,}\\
+\infty & \text{otherwise.}\end{cases}\]

Moreover, assumption~\eqref{eq:F-assumption-2} entails that $| \nabla \cE^n|^2 (\mu) \le c \mathcal{I}( \tilde{\mu} )$, where $c$ is some absolute constant, since
\begin{equation}\label{eq:fisher-info-inequality}\begin{split}  \int  \abs{ \nabla \w^n \conv  [u'(\tilde \emp^n)]}^2 \emp^n & =  \int  \abs{ \w^n \conv \sqa{u''(\tilde{\emp}^n) \nabla \tilde \emp^n}}^2 \emp^n
 \le  \int  \w^n \conv  \abs{ {u''(\tilde{\emp}^n) \nabla \tilde \emp^n}}^2 \emp^n \\
 & \le  \int \abs{ {u''(\tilde{\emp}^n) \nabla \tilde \emp^n}}^2 \w^n \conv \emp^n =\int \abs{u''(\tilde{\emp}^n) \tilde{\emp}^n}^2 \frac { \abs{ \nabla \tilde \emp^n}^2}{\tilde \emp^n }  \\
 &  \le 4\lambda^2 \int \abs{\nabla \sqrt{\tilde{\mu}^n} }^2  = \lambda^2\mathcal{I} (\tilde{\mu}^n),
\end{split}\end{equation}
so that we may focus on the entropy and Fisher information terms only.

\begin{proposition}[energy dissipation and tightness]\label{prop:energy}
Under the assumptions of Theorem~\ref{thm:main-local}, there exists some constant $c>0$ (independent of $n\ge 1$) such that, for every $n\ge 1$, one has
\begin{equation}\label{eq:energy-estimate-full}  \sup_{t\in [0, T] } \E\sqa{\cE^n (\mu_t^n)} + \E\sqa{\int_0^T \sqa{ \cE^n\bra{\mu^n_t } + \mathcal{I}\bra{\tilde{\mu}^n_t } } dt }\le  c \bra{ \E\sqa{\cE^n (\mu_0^n) }+1}. \end{equation}
Moreover, the sequence of laws of $\mu^n$ is tight in $C([0,T]; \cP(\Td))$. 
\end{proposition}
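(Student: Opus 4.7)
\emph{Plan.} I would apply the energy identity of Proposition~\ref{prop:energy-identity-general}, in the form of Remark~\ref{rem:diffusion-constant} with $\sigma = \sqrt{2}\,\Id$, to $F(z) = u(z) + z\log z$ and the system~\eqref{eq:sdes-local}. For each fixed $n \ge 1$, $w^n$ is bounded, hence so are $\tilde{\mu}^n$ and $F'(\tilde{\mu}^n)$, so the stochastic term is a genuine martingale. Taking expectations then reduces the proof of~\eqref{eq:energy-estimate-full} to estimating the drift integrand
\[ A_s(y) := \cL_s(F'(\tilde{\mu}^n_s) \conv w^n)(y) + \tfrac{1}{n}[F''(\tilde{\mu}^n_s) \conv |\nabla w^n|^2](y), \quad \cL_s\varphi = h^n \cdot \nabla\varphi + \Delta\varphi, \]
with $h^n = -\nabla[u'(\tilde{\mu}^n_s) \conv w^n]$.

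\emph{Core calculation.} Writing $F'(z) = u'(z) + \log z + 1$ and using the explicit form of $h^n$, I would split $\int A_s\, d\mu^n_s$ into: (i) the diagonal contribution $\int h^n \cdot \nabla[u'(\tilde{\mu}^n_s)\conv w^n]\, d\mu^n_s = -\int |h^n|^2 d\mu^n_s$; (ii) a cross term $\int h^n \cdot \nabla[\log\tilde{\mu}^n_s \conv w^n]\, d\mu^n_s$; (iii) the Laplacian contribution, which, via the symmetry identity $\int (g \conv w^n)\,d\mu^n_s = \int g\,\tilde{\mu}^n_s\, dx$ (which uses $w^n(-\cdot) = w^n$) and integration by parts, reduces to $-\int u''(\tilde{\mu}^n_s) |\nabla\tilde{\mu}^n_s|^2 dx - \mathcal{I}(\tilde{\mu}^n_s)$; and (iv) the noise correction, bounded by~\eqref{eqn:Wass_2} and the standing hypothesis $\beta \le d/(d+2)$ as $\tfrac 1 n \int F''(\tilde{\mu}^n_s) \conv |\nabla w^n|^2 d\mu^n_s \le c\, n^{\beta(2/d+1)-1}\int (u''(\tilde{\mu}^n_s)\tilde{\mu}^n_s + 1)\, dx \le c(\lambda+1)$, thanks to~\eqref{eq:F-assumption-2}. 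The term $-\int u''(\tilde{\mu}^n_s) |\nabla\tilde{\mu}^n_s|^2$ is at most $\lambda\mathcal{I}(\tilde{\mu}^n_s)$ directly from~\eqref{eq:F-assumption-2}, while the cross term is controlled by Cauchy-Schwarz together with~\eqref{eq:fisher-info-inequality} and its logarithmic analogue $\int |\nabla[\log\tilde{\mu}^n_s\conv w^n]|^2 d\mu^n_s \le \mathcal{I}(\tilde{\mu}^n_s)$ (obtained from Jensen plus the same symmetry trick), possibly sharpened by completing squares against the available $-\int |h^n|^2$. These estimates combine under $\lambda < 1$ to yield
\[ \int A_s\, d\mu^n_s \le -c_0\, \mathcal{I}(\tilde{\mu}^n_s) + C \]
for some constants $c_0 > 0$, $C \ge 0$ independent of $n$.

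\emph{Conclusion and tightness.} Integrating in time and taking expectation yields $\E[\cE^n(\mu^n_t)] + c_0\, \E[\int_0^t \mathcal{I}(\tilde{\mu}^n_s)\, ds] \le \E[\cE^n(\mu^n_0)] + CT$ uniformly in $t \in [0,T]$ and $n\ge 1$. Since $z\log z \ge -1/e$ and~\eqref{eq:F-assumption-2} prevents uncontrolled growth of $u$, $F$ is bounded below by an absolute constant, which delivers both the supremum and the Fisher information terms of~\eqref{eq:energy-estimate-full}, while $\E[\int_0^T \cE^n(\mu^n_t)\,dt] \le T \sup_t \E[\cE^n(\mu^n_t)]$ absorbs the remaining term. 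Tightness in $C([0,T]; \cP(\Td))$ then follows from Proposition~\ref{prop:tightness-general} with, e.g., $c_1 = 2$, $c_2 = 4$: since $\sigma$ is constant, the only nontrivial hypothesis is uniform boundedness of $\E[\int_0^T \int |h^n|^2 d\mu^n_s ds]$, which by~\eqref{eq:fisher-info-inequality} is at most $\lambda^2 \E[\int_0^T \mathcal{I}(\tilde{\mu}^n_s)\, ds]$, already controlled. The main obstacle is the core calculation: extracting a strictly positive dissipation coefficient $c_0$ after summing the four pieces is delicate and crucially uses the strict inequality $\lambda < 1$ in~\eqref{eq:F-assumption-2}, together with the two dual Jensen-plus-symmetry reductions, namely~\eqref{eq:fisher-info-inequality} and its logarithmic counterpart.
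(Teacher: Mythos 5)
Your overall scheme (energy identity, drift estimate against the Fisher information, then Proposition~\ref{prop:tightness-general} with $c_1=2$ and the bound $\int\abs{h^n}^2 d\mu^n_s\le\lambda^2\mathcal{I}(\tilde{\mu}^n_s)$ from~\eqref{eq:fisher-info-inequality}) is sound in outline, and the tightness step is exactly the paper's. But the core calculation has a genuine gap, and it comes precisely from your choice of applying It\^o to the \emph{full} energy $F(z)=u(z)+z\log z$ rather than to the entropy alone. Doing so produces, besides the cross term, the extra contribution $-\int u''(\tilde{\mu}^n_s)\abs{\nabla\tilde{\mu}^n_s}^2$, which for concave $u$ is \emph{positive} and, under~\eqref{eq:F-assumption-2}, can be as large as $+\lambda\,\mathcal{I}(\tilde{\mu}^n_s)$. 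The only bounds your argument can use after Jensen through the convolutions are $\int\abs{h^n}^2 d\mu^n\le\lambda^2\mathcal{I}$, $\int\abs{\nabla\w^n\conv\log\tilde{\mu}^n}^2 d\mu^n\le\mathcal{I}$ and $\abs{\int u''(\tilde{\mu}^n)\abs{\nabla\tilde{\mu}^n}^2}\le\lambda\mathcal{I}$; completing the square optimally against the available $-\int\abs{h^n}^2 d\mu^n$ gives at best $-\norm{h^n}^2_{L^2(\mu^n)}+\norm{h^n}_{L^2(\mu^n)}\norm{\nabla\w^n\conv\log\tilde{\mu}^n}_{L^2(\mu^n)}\le\tfrac14\mathcal{I}$, so the total coefficient of $\mathcal{I}$ is at best $\tfrac14+\lambda-1=\lambda-\tfrac34$ (and $2\lambda-1$ with the cruder Cauchy--Schwarz splitting). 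Hence your claim that ``these estimates combine under $\lambda<1$'' to give $-c_0\mathcal{I}+C$ fails for $\lambda\in[3/4,1)$: the pointwise correlation between $h^n$ and $u''(\tilde{\mu}^n)\nabla\tilde{\mu}^n$ that would rescue the sign is destroyed once each factor is estimated separately through the convolution. The paper avoids this entirely by applying It\^o only to $\operatorname{Ent}(\tilde{\mu}^n_t)=\int\tilde{\mu}^n_t\log\tilde{\mu}^n_t$: then the Laplacian supplies the full dissipation $-\mathcal{I}(\tilde{\mu}^n_t)$, the interaction enters through a \emph{single} cross term bounded by $\tfrac{1+\lambda^2}{2}\mathcal{I}$, and the net coefficient $-\tfrac{1-\lambda^2}{2}$ is negative for every $\lambda<1$; the bound on $\cE^n(\mu^n_t)$ is then recovered a posteriori because~\eqref{eq:F-assumption-2} makes $F$ comparable to $z\log z$ up to affine terms.

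Two further technical points. First, your justification of the martingale property (``$\tilde{\mu}^n$ and $F'(\tilde{\mu}^n)$ are bounded for fixed $n$'') is not correct as stated: $\w^n$ is compactly supported, so $\tilde{\mu}^n$ vanishes on open sets and $F'(\tilde{\mu}^n)=u'(\tilde{\mu}^n)+\log\tilde{\mu}^n+1$ is unbounded below; moreover $F\notin C^2$ at $0$ (since $F''(z)\sim 1/z$), so Proposition~\ref{prop:energy-identity-general} cannot be applied verbatim. The paper handles this with the regularization $\int\tilde{\mu}^n_t\log(\tilde{\mu}^n_t+\veps)$ and a limit $\veps\downarrow 0$; you would need an analogous regularization of $F$. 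Second, your bound on the noise correction and its use of $\beta\le d/(d+2)$, as well as the logarithmic analogue of~\eqref{eq:fisher-info-inequality}, are fine and coincide with the paper's estimates.
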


\begin{proof}
We apply It\^o formula to the entropy process $t\mapsto \operatorname{Ent}(\tilde{\mu}^n_t) = \int \tilde\emp ^n_t\log(\tilde{ \emp}^n_t)$ (to be rigorous, we use the approximation $\int \tilde\emp ^n_t\log(\tilde{ \emp}^n_t+\veps)$ and then let $\veps \downarrow 0$), and arguing as in Proposition~\ref{prop:energy-identity-general} and Remark~\ref{rem:diffusion-constant}, we obtain that it can be rewritten as the sum of a finite variation process, with time derivative
\[  - \int\ang{ \nabla \w^n \conv  \log (\tilde \emp^n_t), \nabla \w^n \conv [u'(\tilde \emp^n_t )] } \emp^n  -\int \frac{ |\nabla \tilde \emp^n_t |^2} {\tilde \emp^n_t}+ \frac 1 n \int \frac{ \emp^n_t * |\nabla\w^n |^2} {\tilde \emp^n_t} \]
and a martingale, whose quadratic variation process has time derivative given by
\[  \frac 2 n  \int \abs{ \log(\tilde \emp ^n_t) \conv \nabla \w^n}^2 \mu_t^n. \]
The crucial point is to bound from above the quantity (we omit to specify $t \in [0,T]$ for brevity)
\[ - \int \ang{ \nabla \w^n \conv  \log(\tilde{\emp}^n), \nabla \w^n \conv [u'(\tilde \emp^n )] } \emp^n  \le \frac 1 2 \int \abs{ \nabla \w^n \conv  [u'(\tilde \emp^n)]}^2  \emp^n  + \frac 1 2 \int \abs{ \nabla \w^n \conv  \log (\tilde \emp^n)}^2  \emp^n
\]
where we splitted $|\ang{a,b}| \le \frac{a^2}2 + \frac{b^2}2$. By~\eqref{eq:fisher-info-inequality} and a similar argument with $\log$ in place of $u'$, we obtain
\[ - \int \ang{ \nabla \w^n \conv  \log(\tilde{\emp}^n), \nabla \w^n \conv [u'(\tilde \emp^n )] } \emp^n\le \frac {\lambda^2 +1}{2} \mathcal{I}(\tilde{\emp}^n).\]

 To bound from above the term
\[\frac 1 n \int  \frac{ \emp^n_t * |\nabla\w^n |^2}{\tilde{\emp}^n_t}\]
we use~\eqref{eqn:Wass_2}, to deduce
\[ \frac 1 n \int  \frac{ \emp^n_t * |\nabla\w^n |^2}{\tilde{\emp}^n_t} \le c n^{\beta(2/d+1) -1}  \int \frac{ \emp^n_t * \w^n }{\tilde{\emp}^n_t}\le  c n^{\beta(2/d+1) -1}.  \]
Taking expectation, so that the martingale term gives no contribution, we have the inequality, for every $t \in [0,T]$,
\[ \E\sqa{ \operatorname{Ent}(\tilde{\mu}_t^n) } \le \E\sqa{\operatorname{Ent}(\tilde{\mu}_0^n) }- \frac {1 -\lambda^2}{2} \E\sqa{ \int_0^t  \mathcal{I}(\tilde{\mu}^n_s) ds } + c n^{\beta(2/d+1) -1},\]
hence 
~\eqref{eq:energy-estimate-full} follows, since $\beta \le \frac{d}{d+2}$ and by assumption~\eqref{eq:F-assumption-2}, it is equivalent to the  $\operatorname{Ent}(\tilde{\mu})$ or $\cE^n(\mu)$,  and $| \nabla \cE^n|^2 (\mu) \le \lambda^2 \mathcal{I}( \tilde{\mu} )$.
 
The last statement, about the tightness for the laws of $\mu^n$, follows from Proposition~\ref{prop:tightness-general}, with $c_1 =2$ and any choice of $c_2>2$. Indeed, it is sufficient to notice that the diffusion coefficients are uniformly bounded, with $\abs{\sigma} = \sqrt{2}$ and that the energy inequality~\eqref{eq:energy-estimate-full} and~\eqref{eq:fisher-info-inequality} entails an integral bound for the drift terms. 
\end{proof}

\begin{proposition}[limit]\label{prop:limit}
Under the assumptions of Theorem~\ref{thm:main-local}, any limit point  of the laws of $\mu^n$, as $n \to \infty$, is a probability measure concentrated on weak solutions $\mu  \in AC^2( [0,T]; \cP(\R^d) )$ with $\mu_t(dx) = \rho_t (x) dx$ for every $t \in [0,T]$, solving~\eqref{eq:pde-diffusion}, in duality with $f \in C^2_b(\Td)$, with
\begin{equation}\label{eq:regularity-estimates-rho} (\rho_t)_{t \in [0,T]} \in L^1([0,T]; W^{1,1}(\Td)) \quad \text{and} \quad \int_0^T \int \frac{ |\nabla P(\rho )|^2 }{\rho} < \infty.\end{equation}
\end{proposition}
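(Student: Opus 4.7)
The plan mirrors the proof of Theorem~\ref{thm:main-non-local}: from the tightness established in Proposition~\ref{prop:energy}, extract an almost surely convergent subsequence (via Skorokhod representation), upgrade the convergence of $\tilde{\mu}^n$ to a strong one using the Fisher information bound, and pass to the limit in the stochastic Fokker-Planck equation~\eqref{eq:weak-sn-local}. The martingale term $M^n_t(\varphi)$ contributes nothing in the limit, since its quadratic variation is of order $1/n$ by~\eqref{eq:martingale-quadratic-variation-general}.

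For the identification of the limit as $\mu_t(dx) = \rho_t(x) dx$ and the regularity~\eqref{eq:regularity-estimates-rho}, the entropy and Fisher information bounds of Proposition~\ref{prop:energy} imply that $\sqrt{\tilde\mu^n}$ is almost surely bounded in $L^2([0,T]; W^{1,2}(\Td))$; combined with a time-regularity estimate extracted from~\eqref{eq:weak-sn-local} (using the integrated control on the drift afforded by~\eqref{eq:fisher-info-inequality}), an Aubin--Lions type argument in the spirit of Proposition~\ref{prop:weak-strong} yields strong convergence $\tilde{\mu}^n \to \rho$ in $L^1([0,T]\times \Td)$. Lower semicontinuity transfers the Fisher bound to $\rho$, and since~\eqref{eq:F-assumption-2} gives $P'(z) = 1 + zu''(z)$ uniformly bounded,
\[ \int_0^T\!\! \int \frac{|\nabla P(\rho_s)|^2}{\rho_s} dx\, ds \le 4(1+\lambda)^2 \int_0^T\!\! \int |\nabla \sqrt{\rho_s}|^2 dx\, ds < \infty, \]
while the $L^1_t W^{1,1}_x$ bound on $\rho$ follows from Cauchy--Schwarz.

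The core step is the passage to the limit in the drift term of the martingale problem. Testing against $\varphi \in C^2(\Td)$ and using $\nabla \w^n * u'(\tilde\mu^n_s) = \w^n * \nabla u'(\tilde\mu^n_s)$ together with the evenness of $\w^n$, I would rewrite
\[ \int \nabla \varphi \cdot [\nabla \w^n * u'(\tilde\mu^n_s)]\, d\mu^n_s = \int u''(\tilde\mu^n_s)\, \nabla \tilde\mu^n_s \cdot [(\nabla\varphi\, \mu^n_s) * \w^n]\, dy, \]
which moves all derivatives onto the smooth density $\tilde{\mu}^n_s$ and replaces $\mu^n_s$ by a mollified test function $(\nabla\varphi\, \mu^n_s)*\w^n$ converging uniformly to $\nabla\varphi\, \rho_s$. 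Using the strong convergence of $\tilde\mu^n$, the weak convergence of $\nabla\sqrt{\tilde\mu^n}$, continuity of $z\mapsto zu''(z)$ and the uniform bound $|zu''(z)|\le \lambda$, the limit equals $\int \rho_s \nabla u'(\rho_s)\cdot \nabla \varphi\, dy = -\int P^*(\rho_s)\Delta\varphi\, dy$ with $P^*(z) = zu'(z)-u(z)$. Combined with the Laplacian term $\int \Delta\varphi\, d\mu^n_s \to \int \Delta\varphi\, \rho_s\, dx$, this gives precisely the weak formulation of~\eqref{eq:pde-diffusion} with $P(z) = P^*(z) + z$.

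The main obstacle is the rigorous justification of this limit: since $u''(z)$ may blow up as $z\to 0^+$ (only $zu''(z)$ is globally bounded by~\eqref{eq:F-assumption-2}), strong $L^p$-convergence of $\tilde\mu^n$ alone is insufficient. The crucial factorization is
\[ u''(\tilde\mu^n)\, \nabla \tilde\mu^n = 2\,[\tilde\mu^n u''(\tilde\mu^n)]\,\frac{\nabla \sqrt{\tilde\mu^n}}{\sqrt{\tilde\mu^n}}, \]
together with the pointwise bound $|(\nabla\varphi\, \mu^n_s)*\w^n|\le \|\nabla\varphi\|_\infty \tilde\mu^n_s$, yielding an integrand bounded by $2\lambda \|\nabla\varphi\|_\infty \sqrt{\tilde\mu^n_s}\,|\nabla\sqrt{\tilde\mu^n_s}|$, equi-integrable thanks to the Fisher information bound. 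This delicate interplay between Fisher information, entropy, and the mollification kernel --- which is exactly what the machinery of Proposition~\ref{prop:energy} is designed to support --- is the technical heart of the argument.
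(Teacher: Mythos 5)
Your outline has the right skeleton, and in fact its core ingredients coincide with the paper's: moving the test function inside the convolution (a commutator argument), identifying the nonlinearity through the pressure $P_u(z)=zu'(z)-u(z)$, and deriving \eqref{eq:regularity-estimates-rho} from the Fisher information bound exactly as in the paper. However, the probabilistic layer is misstated. Proposition~\ref{prop:energy} provides the entropy/Fisher bounds \emph{in expectation} only, so $\sqrt{\tilde{\mu}^n}$ is \emph{not} almost surely bounded in $L^2([0,T];W^{1,2}(\Td))$ uniformly in $n$, and your Skorokhod--plus--Aubin--Lions route would additionally require a time-regularity estimate (including control of the martingale part of \eqref{eq:weak-sn-local}) and a uniform-integrability argument in $\omega$ that you do not supply. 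The paper avoids all of this by never leaving the level of convergence in law: Proposition~\ref{prop:weak-strong}, which rests on heat-semigroup smoothing and only the expectation-level $L^1$ gradient bound \eqref{eq:l1-norm-nabla-r}, is applied directly to the functional $(\nu,r)\mapsto \int\varphi d\nu_t-\int\varphi d\nu_0-\int_0^t\int(\Delta\varphi)P(r_s)\,ds$, which is continuous because $P$ is Lipschitz under \eqref{eq:F-assumption-2}.

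Two concrete flaws sit in your ``core step''. First, $(\nabla\varphi\,\mu^n_s)\conv\w^n$ does \emph{not} converge uniformly to $\nabla\varphi\,\rho_s$: there is no sup-norm control of $\tilde{\mu}^n_s$ whatsoever, and a mollified empirical measure never converges uniformly to the limit density. What is true, and what the paper proves, is the commutator estimate: $(\nabla\varphi\,\mu^n_s)\conv\w^n$ equals $\nabla\varphi\,\tilde{\mu}^n_s$ up to an error bounded by $\norm{\nabla^2\varphi}_\infty\, n^{-\beta/d}\,\tilde{\mu}^n_s$, i.e.\ the term $\cC^n_s$, which is controlled by $n^{-\beta/d}\,\lambda\int|\nabla\tilde{\mu}^n_s|$ and hence vanishes in expectation by \eqref{eq:energy-estimate-full} and \eqref{eq:l1-norm-nabla-r}; after that, $\tilde{\mu}^n$ converges only in an $L^1$-type sense. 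Second, the asserted equi-integrability of $\sqrt{\tilde{\mu}^n}\,|\nabla\sqrt{\tilde{\mu}^n}|$ does not follow from the Fisher bound: a product of two $L^2$-bounded sequences is bounded in $L^1$ but need not be uniformly integrable, so the weak-times-strong passage to the limit you call the ``technical heart'' is incomplete as stated (it could be repaired via Vitali using the uniform integrability of $\tilde{\mu}^n$ itself, but that is precisely the missing work). The paper's argument sidesteps this product limit altogether: by the chain rule $\tilde{\mu}^n\nabla u'(\tilde{\mu}^n)=\nabla P_u(\tilde{\mu}^n)$ one integrates by parts once more, so that only the Lipschitz quantity $P_u(\tilde{\mu}^n)$, with no derivatives, has to pass to the limit, and Proposition~\ref{prop:weak-strong} with $p=1$ does exactly that. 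Finally, you never address the claim $\mu\in AC^2([0,T];\cP(\Td))$: in the paper this follows by reading the limit equation in the transport form \eqref{eq:pde-transport} with velocity $v=\nabla P(\rho)/\rho$, whose squared $L^2(\rho)$-norm, i.e.\ $\int|\nabla P(\rho)|^2/\rho$, bounds the metric speed and is finite by \eqref{eq:regularity-estimates-rho}.
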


\begin{proof}
By the previous proposition, we may consider a converging subsequence $\mu^{n_k}$; to keep notation simple, we omit to write the subscript $k$, and write only $n$ below. Given $\varphi \in C^2_b(\Td)$, $t \in [0,T]$, we  pass to the limit in law, as $n \to \infty$, in the identity between (real valued) random variables
\begin{equation}\label{eq:approximate-local-weak} \int \varphi d \mu_t^n - \int \varphi d \mu_0^n = \int_0^t\int  \sqa{-\ang{ \nabla \varphi, \nabla \w^n \conv u'(\tilde{\mu}^n_s) } + (\Delta \varphi) }d\mu_s^n  ds + M^n_t \varphi,\end{equation}
to obtain
\[ \int \varphi d \mu_t  - \int \varphi d \mu_0  = \int_0^t \int (\Delta \varphi)(x) P( \rho_s(x)) dx ds.\]
The key point is to apply Proposition~\ref{prop:weak-strong} with exponent $p =1$, to the functional defined on $C([0,T]; \cP(\Td) ) \times L^1( [0,T]; L^1(\Td) )$ by
\begin{equation}\label{eq:functional-limit-local} (\nu, r) \mapsto \int \varphi d \nu_t - \int \varphi d \nu_0 - \int_0^t\int  \bra{ \Delta \varphi} P( r_s )ds.\end{equation}
Being $z \mapsto P(z)$ Lipschitz (its derivative is $z u''(z) +1$), continuity for~\eqref{eq:functional-limit-local} with respect to both variables in $C([0,T]; \cP(\Td) ) \times L^1( [0,T]; L^1(\Td) )$ is trivial. We are in a position to apply Proposition~\eqref{prop:weak-strong} since the bound on the Fisher information~\eqref{eq:energy-estimate-full} entails an $L^1$ bound on $\nabla \tilde{\mu}_t^n$, via the estimate  
\begin{equation}\label{eq:l1-norm-nabla-r} \int \abs{\nabla r } = \int \abs{\nabla \bra{\sqrt{r}}^2} =  2 \int \sqrt{r} \abs{\nabla \sqrt{r}} \le  1 + \mathcal{I}(r), \end{equation}
for any sufficiently smooth probability density $r(x)dx \in \cP(\Td)$.

By Proposition~\ref{prop:weak-strong} applied e.g.\ to the composition of~\eqref{eq:functional-limit-non-local} with the absolute value function, we deduce that the limit point $\mu$ is concentrated on absolutely continuous measures, $\mu_t (dx)= \rho_t(x) dx$, with $\nabla \rho \in L^1([0,T]; L^1(\Td))$ and that
\[ \E\sqa{ \abs{\int \varphi d \mu^n_t - \int \varphi d \mu_0^n - \int_0^t\int  \bra{\Delta \varphi} P(\tilde{\mu}^n_s) ds } } \to \E\sqa{ \abs{\int \varphi d \mu_t - \int \varphi d \mu_0 - \int_0^t\int  \bra{\Delta \varphi} P(\rho_s) ds } }\]
as $n \to \infty$. On the other side, by~\eqref{eq:approximate-local-weak}, for every $n\ge 1$, the term of the sequence above coincides with
\[ \E\sqa{ \abs{ \int_0^t  \sqa{ \int \bra{\Delta \varphi} P(\tilde{\mu}^n_s) + \int  \sqa{\ang{ \nabla \varphi, \nabla \w^n \conv u'(\tilde{\mu}^n_s) } - (\Delta \varphi) }\mu_s^n  }ds   - M_t^n \varphi }}\]

The random variable $M^n_t \varphi$ converges to $0$ strongly in $L^2(\Prob)$ (it is sufficient to use the isometry for martingales and the quadratic variation~\eqref{eq:martingale-quadratic-variation-general}). We decompose $P(z) =P_u(z)+ z$,  where $P_u(z) = z u'(z) - u(z)$, and we immediately estimate
\[ \E\sqa{ \abs{ \int_0^t  \int \bra{\Delta \varphi} \sqa{ \tilde{\mu}^n_s - \mu^n_s} ds } }\to 0,
 \]
 as $n \to \infty$ (e.g., again by Proposition~\ref{prop:weak-strong}). Hence, we have to deal only with the terms
\[  \E\sqa{ \abs{ \int_0^t  \sqa{ \int \bra{\Delta \varphi} P_u(\tilde{\mu}^n_s) + \int  \sqa{\ang{ \nabla \varphi, \nabla \w^n \conv u'(\tilde{\mu}^n_s) } } \mu^n_s } ds } },\]
where the difficulty arises because it involves non-linear transformations of $\tilde{\mu}$. To show that also this contribution is infinitesimal, we introduce, for $s \in [0,t]$, the commutator between the ``derivation'' $\ang{\nabla \varphi, \nabla \cdot}$ and the convolution operator $\w^n  * $, i.e.\ 
\[ \cC^n_s := \int \ang{\nabla \varphi,  \nabla \w^n \conv u'(\tilde{\mu}^n_s)}\mu^n_s - \int \ang{\nabla \varphi, \nabla u'(\tilde{\mu}^n_s)} \tilde \mu^n_s .\]
For simplicity, we omit to specify $s \in [0,t]$ in what follows, and we use the identity
\begin{equation*}
\begin{split}
\int  \ang{ \nabla \varphi, \nabla \w^n \conv u'(\tilde{\mu}^n) } d\mu^n 
 & = \cC^n  + \int \ang{ \nabla \varphi, \nabla  u'(\tilde{\mu}^n) } \tilde \mu^n \\
  & = \cC^n - \int \ang{ \nabla \varphi, \nabla  P_u(\tilde{\mu}^n) }\\
  & = \cC^n - \int \bra{\Delta \varphi} P_u(\tilde{\mu}^n).
\end{split}
\end{equation*} 
The thesis therefore amounts to the fact that $\cC^n$ is infinitesimal. In turn, this can be seen as follows:
\begin{equation*}\begin{split} | \cC^n | 
& = \abs{\int \ang{\nabla \vphi,  \nabla \w^n * u'(\tilde{\mu}^n_t)}\mu^n_t - \int \ang{\nabla \vphi, \nabla u'(\tilde{\mu}^n_t)} \tilde \mu^n_t}\\
& = \abs{ \int \int w^n(y) \ang{ \bra{\nabla \vphi (x)  - \nabla\vphi (x-y)}  \nabla u'(\tilde{\mu}^n)(x-y) } dy \mu^n(dx)}\\
& \le \norm{ \nabla ^2 \vphi }_{L^\infty(\Td)} \norm{y}_{L^\infty(\supp \w^n)} \int\sqa{ w^n \conv \abs{  \nabla u'(\tilde{\mu}^n)} } \mu^n \\
& = \norm{ \nabla ^2 \vphi }_{L^\infty(\Td)} \norm{y}_{L^\infty(\supp \w^n)} \int \abs{  \nabla u'(\tilde{\mu}^n)} \tilde{\mu}^n \to 0,
\end{split}\end{equation*}
where the inequality above follows from writing $\nabla \vphi(x) - \nabla \vphi(x-y) = \int_0^1 \nabla^2 \vphi(x - \veps y) y d\veps$. The quantities in the last line above are infinitesimal as $n \to \infty$, since
\[  \norm{y}_{L^\infty(\supp \w^n)}\le n^{-\beta/d}\norm{y}_{L^\infty(\supp \w^1)} \quad \text{and} \quad \int \abs{  \nabla u'(\tilde{\mu}^n)} \tilde{\mu}^n \le \lambda \int \abs{ \nabla \tilde{\mu}^n }, \]
and the integral is uniformly bounded, by~\eqref{eq:l1-norm-nabla-r}.

Next, we show that~\eqref{eq:regularity-estimates-rho} holds. Indeed, this follows from the fact that $\sqrt{\tilde{\mu}^n}$ is bounded in $L^2( \Omega \times [0,T]; W^{1,2}(\Td))$ and (up to our choice of a subsequence) it converges weakly towards $\sqrt{\rho}$: hence we have $\sqrt{\rho} \in L^2( \Omega \times [0,T]; W^{1,2}(\Td))$, which entails $\rho \in L^1(\Omega \times [0,T]; W^{1,1}(\Td))$. Moreover, we have
\[ \int_0^T \int \frac{ \abs{ \nabla P(\rho_t) }^2 }{\rho_t} dt \le \int_0^T \int \bra{\rho_t u''(\rho_t) + 1}^2 \frac{ \abs{ \nabla \rho_t }^2 }{\rho} \le {(\lambda+1)^2} \int_0^T \mathcal{I}(\rho_t)dt,\]
hence~\eqref{eq:regularity-estimates-rho} follows.

%

Finally, to show that $\mu  \in AC^2( [0,T]; \cP(\Td) )$, we notice that, for $s$, $t \in [0,T]$, with $s \le t$, we can write
\[ \int \varphi \rho_t -  \int \varphi \rho_s = \int_s^t \ang{ \nabla \varphi, \frac{\nabla P(\rho_r)}{\rho_r} } \rho_r  =\int_s^t \ang{ \nabla \varphi, v_r } \rho_r, \]
i.e., the curve is solution of the transport formulation~\eqref{eq:pde-transport}, hence it is absolutely continuous, with metric speed bounded from above by
\[ \int \abs{v }^2 \rho  = \int \frac{ \abs{ \nabla P(\rho)}^2}{\rho} < \infty.\]
\end{proof}

\section{Auxiliary results} \label{sec:appendix}

\begin{lemma}[moment bound]\label{lem:iid-general}
Let $\beta \in [0,1]$, let $\bar{\rho}(x) dx$, $\w^1(x)dx \in \cP(\Td)$ and, for $n \ge 1$, define $\w^n(x) = n^\beta \w^1(n^{\beta/d} x)$. Let $(X^{i})_{i =1}^n$ be independent, uniformly distributed random variables with common law $\bar{\rho}(x) dx$. If $\mu^n =\frac 1 n \sum_{i=1}^n \delta_{X^i}$, one has
\[ \E\sqa{ \int \bra{\mu^n \conv \w^n }^2(x) dx } \le  \norm
{\w^1}_\infty + \int \bar{\rho}^2(x) dx.\]
\end{lemma}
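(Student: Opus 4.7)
The plan is to expand the square pointwise and then take expectation, separating the diagonal and off-diagonal contributions. Concretely, writing
\[ (\mu^n \conv \w^n)(x) = \frac{1}{n} \sum_{i=1}^n \w^n(x - X^i), \]
one gets
\[ \int (\mu^n \conv \w^n)^2(x) \, dx = \frac{1}{n^2} \sum_{i,j=1}^n \int \w^n(x - X^i) \w^n(x - X^j) \, dx. \]
Taking expectation, I would split the double sum into the $n$ diagonal terms ($i=j$) and the $n(n-1)$ off-diagonal ones ($i \neq j$), exploiting independence in the latter case.

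For the diagonal terms, translation invariance gives $\E[\int \w^n(x-X^i)^2 dx] = \norm{\w^n}_2^2$. Using that $\w^n(x) = n^\beta \w^1(n^{\beta/d}x)$, the change of variables $y = n^{\beta/d}x$ yields $\norm{\w^n}_2^2 = n^\beta \norm{\w^1}_2^2$, and the cruder bound $\norm{\w^n}_2^2 \le \norm{\w^n}_\infty \norm{\w^n}_1 = n^\beta \norm{\w^1}_\infty$ is enough. For the off-diagonal terms, independence of $X^i$, $X^j$ and Fubini give
\[ \E\sqa{ \int \w^n(x-X^i) \w^n(x-X^j) \, dx } = \int (\w^n \conv \bar{\rho})^2(x) \, dx, \]
and Young's convolution inequality together with $\norm{\w^n}_1 = 1$ bounds this by $\norm{\bar{\rho}}_2^2$.

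Collecting,
\[ \E\sqa{ \int (\mu^n \conv \w^n)^2 dx } \le \frac{1}{n^2}\bra{n \cdot n^\beta \norm{\w^1}_\infty + n(n-1)\norm{\bar{\rho}}_2^2} \le n^{\beta-1}\norm{\w^1}_\infty + \norm{\bar{\rho}}_2^2, \]
and since $\beta \le 1$, one has $n^{\beta-1} \le 1$, which gives the claim. There is no real obstacle: the only point requiring mild care is to ensure both $\norm{\w^n}_1 = 1$ (which fixes the prefactor $n^\beta$ in the rescaling) and the interplay $n \cdot n^\beta / n^2 = n^{\beta-1} \le 1$ that uses the hypothesis $\beta \le 1$.
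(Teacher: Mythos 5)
Your proof is correct and follows essentially the same route as the paper: expand the square, split the double sum into diagonal and off-diagonal terms via independence, bound the diagonal contribution by $n^{\beta-1}\norm{\w^1}_\infty$ using $\norm{\w^n}_\infty \le n^\beta\norm{\w^1}_\infty$ and $\norm{\w^n}_1=1$, and bound the cross terms by $\int\bar\rho^2$ (the paper uses Jensen's inequality $(\bar\rho\conv\w^n)^2\le\bar\rho^2\conv\w^n$ where you invoke Young's inequality, which here amounts to the same estimate).
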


\begin{proof}
Exchanging expectation and integration with respect to $x \in \Td$, one has
\begin{equation*}\begin{split} \E\sqa{ \int \bra{\mu^n \conv \w^n }^2(x) dx } & = \int \E\sqa{ \bra{\mu^n \conv \w^n }^2(x) }dx  \\
\text{(by definition of $\mu^n$)}\quad & = \int \E\sqa{ \bra{ \frac 1 n \sum_{i=1}^n \w^n(X^i-x) }^2 }dx\\
 & = \int   \frac 1 {n^2} \sum_{i,j=1}^n\E\sqa{ \w^n(X^i-x) \w^n(X^j-x)} dx \\
\text{(by independence)}   \quad & = \int   \sqa{ \frac {n-1} {n}  (\bar{\rho} \conv \w^n)^2(x) + \frac 1 n \bar{\rho} \conv (\w^n)^2 (x) }dx.
\end{split}\end{equation*}
Since $\w^n \le \norm{\w^1}_\infty n^\beta$, we have
\[ \int   \frac 1 n \bar{\rho} \conv (\w^n)^2 (x) dx \le \norm{\w^1}_\infty n^{\beta-1} \int \bar{\rho} \conv \w^n (x) dx = \norm{\w^1}_\infty n^{\beta-1}.\]
By Jensen inequality, $(\bar{\rho} \conv \w^n)^2 \le \bar{\rho}^2 \conv \w^n$, hence
\[ \E\sqa{ \int \bra{\mu^n \conv \w^n }^2(x) dx } \le  \frac{n-1}{n}\int \bar{\rho}^2(x) dx + \norm
{\w^1}_\infty n^{\beta-1} \]
and the thesis follows.
\end{proof}

\begin{proposition}[tightness criterion for $\mu$]\label{prop:tightness-general}
For every $T \in\R$, $T>0$, $c_1, c_2 \in \R$, with $c_1 \ge 2$, $c_2 >2$ and $d \in \Nat \setminus \cur{0}$, there exists a coercive functional 
\[ \Psi: C([0,T]; \cP(\Td) ) \to [0, \infty]\]
such that, for every $n \ge 1$ and $\R^{n \times d}$-dimensional process $X = (X^{i})_{i=1}^n$ on $t \in [0,T]$ satisfying
\[ dX^i_t = h_t(X^i) +  \sigma_t(X^i) dB^i, \quad \text{for $i \in \cur{1, \ldots, n}$, $t \in [0,T]$,}\]
letting $\mu := \frac 1 n \sum_{i=1}^n \delta_{X^i} \in C([0,T];\cP(\Td))$, it holds
\begin{equation}\label{eq:general-tightness} \E\sqa{ \Psi( \mu ) } \le \E\sqa{ \int_0^T \int \bra{|h_t|^{c_1} + |\sigma_t|^{c_2 }} \mu_t dt}.\end{equation}
\end{proposition}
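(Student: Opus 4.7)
The plan is to construct $\Psi$ via Gagliardo-Slobodeckij fractional time-seminorms on the one-dimensional projections $Y_k(t) := \int \varphi_k\, d\mu_t$, for $k \ge 1$, indexed by a countable dense family $(\varphi_k)_k \subset C^2(\Td)$ normalized so that $\|\varphi_k\|_{C^2(\Td)} \le 1$ (e.g.\ a rescaled Fourier basis). Coercivity will then follow by upgrading the implied equicontinuity of each family $\{Y_k\}_{\mu}$ to equicontinuity of $\mu$ in $C([0,T]; \cP(\Td))$ via a standard diagonalization over $k$, combined with the compactness of $\cP(\Td)$ in the weak topology and Ascoli-Arzel\`a.

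The expectation bound rests on a three-fold decomposition of the increments $Y_k(t) - Y_k(s)$ via It\^o's formula~\eqref{eq:martingale-problem-general}, into a drift term involving $h$, a diffusion term involving $\sigma \sigma^*$, and a continuous martingale $M^k$ with quadratic variation~\eqref{eq:martingale-quadratic-variation-general}. Applying H\"older's inequality in time together with the Jensen inequality on the probability measure $\mu_r$ (i.e.\ $(\int f\, d\mu_r)^q \le \int f^q\, d\mu_r$ for $q \ge 1$), and the Burkholder-Davis-Gundy inequality at exponent $c_2$ for the martingale part, yields the three moment bounds
\begin{align*}
\E\sqa{|Y^{\mathrm{drift}}_k(t,s)|^{c_1}} & \le \|\nabla\varphi_k\|_\infty^{c_1}(t-s)^{c_1-1} \E\sqa{\int_s^t \int |h|^{c_1}\, d\mu_r\, dr},\\
\E\sqa{|Y^{\mathrm{diff}}_k(t,s)|^{c_2/2}} & \le C \|\nabla^2\varphi_k\|_\infty^{c_2/2}(t-s)^{c_2/2-1} \E\sqa{\int_s^t \int|\sigma|^{c_2}\, d\mu_r\, dr},\\
\E\sqa{|M^k_t - M^k_s|^{c_2}} & \le C_{c_2}\, n^{-c_2/2}\|\nabla\varphi_k\|_\infty^{c_2}(t-s)^{c_2/2-1} \E\sqa{\int_s^t\int|\sigma|^{c_2}\, d\mu_r\, dr}.
\end{align*}
I would then take $\Psi$ as a weighted sum over $k \ge 1$ of the corresponding Gagliardo-Slobodeckij seminorms $[Y_k]_{\alpha_i, p_i}^{p_i} := \iint_{[0,T]^2} |Y_k(t) - Y_k(s)|^{p_i}/|t-s|^{1+\alpha_i p_i}\, dt\, ds$, with exponent pairs $(p_i, \alpha_i)$ taken from $\{ (c_1, \alpha_1), (c_2/2, \alpha_2), (c_2, \alpha_3)\}$, and with summable positive weights $a_k$ absorbing the $\|\varphi_k\|_{C^2}^{p_i}$ factors. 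Integration of the three moment bounds above against the weights $|t-s|^{-1-\alpha_i p_i}$ over $[0,T]^2$ leads to a bound of the form $\E\sqa{\Psi(\mu)} \le K \E\sqa{\int_0^T\int (|h|^{c_1} + |\sigma|^{c_2})\, d\mu_r\, dr}$, and after a final rescaling of $\Psi$ by $K^{-1}$ we obtain the stated inequality with constant $1$.

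Coercivity is then a consequence of the Sobolev-Slobodeckij embedding $W^{\alpha, p}([0,T]) \embed C^{\alpha - 1/p}([0,T])$, valid whenever $\alpha p > 1$: bounded sublevels of $\Psi$ produce, for each $k$, uniformly equicontinuous real-valued paths $Y_k$, whence the corresponding set of $\mu$'s is equicontinuous in $C([0,T]; \cP(\Td))$ and thus relatively compact by Ascoli-Arzel\`a. The \emph{main obstacle} is to select the exponents $\alpha_i$ so as to simultaneously guarantee integrability of the expected Slobodeckij seminorm, which requires $\alpha_i p_i < \gamma_i$ (with $\gamma_i$ the H\"older exponent appearing in the above moment bounds), and the Sobolev embedding condition $\alpha_i p_i > 1$. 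These jointly force $\gamma_i > 1$, which fails at the boundary cases $c_1 = 2$ (drift component) or $c_2 \in (2, 4]$ (diffusion and martingale components); in these regimes I would instead invoke a Garsia-Rodemich-Rumsey chaining with a non-H\"older gauge, or work with a fractional Sobolev-in-time compactness argument in a suitable negative Sobolev space $H^{-s}(\Td)$ (exploiting that $\cP(\Td) \embed H^{-s}(\Td)$ is bounded for $s > d/2$ and that $H^{-s} \embed H^{-s'}$ is compact for $s' > s$), so that the desired equicontinuity of $\mu$ holds throughout the full parameter range $c_1 \ge 2$, $c_2 > 2$.
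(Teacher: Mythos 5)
There is a genuine gap, and you in fact flag it yourself: your main construction does not cover the range of exponents the proposition asserts. With the Gagliardo--Slobodeckij route you need simultaneously $\alpha_i p_i>1$ (for the embedding $W^{\alpha,p}\embed C^{\alpha-1/p}$, hence equicontinuity) and $\alpha_i p_i<\gamma_i$ (for integrability of $|t-s|^{\gamma_i-1-\alpha_i p_i}$ near the diagonal once you majorize $\E\sqa{\int_s^t\cdots}$ by the integral over $[0,T]$), which forces $\gamma_i>1$, i.e.\ $c_1>2$ for the drift and $c_2>4$ for the diffusion/martingale parts. But the cases $c_1=2$ and $c_2\in(2,4]$ are exactly the ones the statement is designed for and the ones used later in the paper (both applications take $c_1=2$; the text after the statement stresses that allowing $c_1=2$ is the crucial point). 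The fallback you invoke (``Garsia--Rodemich--Rumsey with a non-H\"older gauge, or compactness in $H^{-s}(\Td)$'') is only named, not executed: passing to a weaker spatial norm does not change the time-regularity exponents at all, so the negative-Sobolev variant runs into the same constraint, and the GRR variant would require exhibiting a concrete gauge and checking that it still yields a single coercive functional $\Psi$, independent of $n$, satisfying precisely \eqref{eq:general-tightness}. As written, the proposal proves the proposition only for $c_1>2$, $c_2>4$.

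For comparison, the paper's proof avoids H\"older-type embeddings entirely: it bounds $d(\mu_s,\mu_t)$ by the coupling $\bra{\frac1n\sum_i|X^i_s-X^i_t|^2}^{1/2}$, splits into the absolutely continuous drift part and an $\R^{n\times d}$-valued martingale (estimated via \eqref{eq:bg-hilbert}), and builds $\Psi$ by a union bound over dyadic levels $\veps=2^{-m}$ on time blocks of an $\veps$-dependent mesh $\delta_i(\veps)$; since block increments gain $\delta^{c_1-1}$, resp.\ $\delta^{c_2/2-1}$, \emph{any} strictly positive power can be compensated by the choice of $\delta_i(\veps)$, and the resulting modulus of continuity is a general (non-H\"older) one --- this is what buys $c_1=2$ and all $c_2>2$. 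Your route is actually repairable without changing its architecture: your displayed moment bounds retain the localized factor $\E\sqa{\int_s^t\int\cdots\,d\mu_r\,dr}$, and if you keep it (rather than bounding it by the integral over $[0,T]$), a Fubini argument in $(s,t)$ gains one extra power of $|t-s|$, relaxing the integrability constraints to $\alpha_1<1$, $\alpha_2<1$, $\alpha_3<1/2$, so admissible windows $\alpha_1\in(1/c_1,1)$, $\alpha_2\in(2/c_2,1)$, $\alpha_3\in(1/c_2,1/2)$ exist for all $c_1\ge2$, $c_2>2$. But this repair (or a worked-out GRR/gauge argument) must actually be supplied; in its current form the proof does not establish the statement in the claimed generality.
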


The crucial aspects of the result above are that $\Psi$ does not depend upon $n\ge1$ and it allows for $c_1=2$ (while $c_2$ must be strictly larger than $2$). In the proof, we argue similarly as in the classical Levy's modulus of continuity for Brownian motion, but splitting between the absolutely continuous  and the martingale parts, and using Burkholder-Gundy inequalities for Hilbert-space valued martingales $M$ (see e.g.~\cite{kotelenez_continuity_1984}) in the form
\begin{equation} \label{eq:bg-hilbert} \E\sqa{ \sup_{t \in [0,T] } \| M_t \|^{p}  } \le c_p \E\sqa{ [M_T]^{p/2}  }, \quad \text{for $p \in [2,\infty)$,}\end{equation}
where the constant $c_p$ depends on $p$ only. We say that a functional $\Psi$ on a metric space $X$, taking non negative values, is coercive if its sublevels $\cur{\Psi \le c }$ are compact, for every $c \ge 0$. A quantitative formulation for the tightness for the law of a random variable $\mu$ with values in $X$ follows then by an inequality for $\E\sqa{ \Psi(\mu) }$, via Markov inequality.

\begin{proof}
To simplify notation, we prove the thesis for $T=1$ only. By Ascoli-Arzel\`a theorem, the result amounts to provide estimates on the tightness of $\mu_t$, for every $t \in [0,1]$ as well as estimates on the modulus of continuity of $t \mapsto \mu_t$, e.g.\ with respect to the distance $\cW_2$.  Since $\Td$ is compact, tightness of $\mu_t$  is obvious, hence we focus on the modulus of continuity. By definition of $\cW_2$, we estimate from above the distance in terms of the coupling induced by the process $X$, i.e.\
\begin{equation}\label{eq:wasserstein-coupling} d( \mu_s, \mu_t ) \le \bra{ \frac 1 n \sum_{i=1}^n \abs{ X^{i}_s - X^{i}_t}^2 }^{1/2},\end{equation}
hence it is sufficient to estimate the modulus of continuity for the $\R^{n \times d}$-valued process $X^{n} /\sqrt{ n} $, with respect to the Euclidean distance. We provide a detailed derivation as follows.

Let $D([0,1]^2) \subseteq  C([0,1]^2 ; [0,\infty))$ be the set of (possibly degenerate) continuous distance functions on $[0,1]^2$, i.e.\ continuous functions $\gamma$ such that $\gamma(s,t) = \gamma(t,s)$ and
\[ \gamma(s,t) \le  \gamma(s,r) +  \gamma(r,t), \quad \text{for every $r$,$s$ $t \in [0,1]$.}\]
Such a set is closed in $C([0,1]^2 ; [0,\infty))$, endowed with uniform convergence. Moreover, since $\gamma(0,0) =0$, compactness in $D([0,1]^2)$ follows uniquely from uniform estimates on the modulus of continuity.

Given $\mu \in C([0,1]; \cP(\Td))$, let $\delta\mu \in D([0,T]^2)$ be the function
\[ [0,1]^2 \ni (s,t) \mapsto  \delta\mu (s,t) := d(\mu_s, \mu_t) \]
and notice that $\mu \mapsto \delta \mu$ is a continuous map. Moreover, to estimate the modulus of continuity of $\mu \in C([0,1]; \cP(\Td))$, it is equivalent to bound that of $\delta \mu \in C([0,1]^2; [0,\infty) )$, for given $s$, $t \in [0,1$, we use the inequality
\[ d(\mu_s, \mu_t) = | \delta \mu (s,t) - \delta \mu (t,t)|,\]
hence if $C$ is a modulus of continuity for $\delta \mu$, it is also a modulus of continuity for $\mu$. For a converse, we notice more generally that if $\gamma \in D([0,1]^2)$ and $C: [0,\infty) \to [0,\infty)$ is non-decreasing function such that $\gamma(s,t) \le C(|s-t|)$ for every $s$, $t \in [0,1]$, then for $s_1$, $s_2$, $t_1$, $t_2 \in [0,T]$, one has by the triangular inequality
\begin{equation*}\begin{split}|\gamma (s_1, t_1) - \gamma (s_2, t_2) | & \le |\gamma(s_1, t_1) - \gamma(s_2, t_1) | +  |\gamma (s_2, t_1) -\gamma(s_2, t_2) |\\
& \le C( |s_1 - s_2| ) + C( |t_1 - t_2| ) \le 2C( \sqrt{ |s_1 - s_2|^2 + |t_1-t_2|^2}) 
\end{split}\end{equation*}
hence $2C$ is a modulus of continuity for $\gamma$.

We next show the existence of coercive functionals $\psi_1, \psi_2:  D([0,1]^2) \to [0,\infty]$, so then by defining the coercive functional
\[  D([0,1]^2) \ni \gamma \mapsto \psi(\gamma) = \inf_{\gamma \le \gamma^1 + \gamma^2} \cur{ \psi_1(\gamma^1) + \psi_2(\gamma^2)},\]
we obtain our thesis choosing $\Psi(\mu):= \psi( \delta \mu )$.

For $\veps >0$, we let $\delta_1 = \delta_{1,\veps}$ be the largest number in the form $\delta = 1/n$, with $n \in \Nat$, such that $\delta^{h_i-1}< \veps^{2c_1}$, and we let $\delta_2 = \delta_{2,\veps}$ be the largest number $\delta =1/n$, with $n \in \Nat$, such that $\delta^{c_2/2-1}< \veps^{2c_2}$. We notice that these definition are well-posed because $c_1 \ge 2 >1$ and $c_2 >2$; they are a posteriori justified by the requirements~\eqref{eq:c1} and~\eqref{eq:c2}. Then, we introduce the closed sets
\begin{equation}\label{eq:set-modulus-contuniuity} A_i(\veps) :=  \Big\{ \gamma \in D([0,T]^2) \, : \, \sup_{k =1, \ldots, n_i} \, \sup_{s \in \sqa{(k-1) \delta_i, k\delta_i}}  \gamma(s, (k-1) \delta_i) \le \veps \Big\},\end{equation}
and we let $\psi_{i} (\gamma) := \sum_{m\ge 0} (m +1) \chi_{ A_i^c(2^{-m})}$, hence $\psi_{i}$ is lower semicontinuous, and $\psi_i(\gamma) \le m$ implies $\gamma \in A_i(2^{-k})$, for every $k \ge m$. Coercivity follows from the remark above, since  if we define the non-increasing function
\begin{equation*} C_{i,m}(x) := \left\{ \begin{array}{ll}
         2^{1-k} & \text{if $x \in [\delta_{i,2^{-(k+1)}}, \delta_{i,2^{-k}})$ with $k \ge m$,}\\
         2^{1-m}/ \delta_{i,2^{-m}} & \text{if $x \in [\delta_{i,2^{-m}}, +\infty)$},\end{array} \right. \end{equation*}
         then one has, for every $\gamma \in D([0,1]^2)$ with $\psi_i(\gamma) \le m$, $\gamma(s,t) \le C_{i,m}(|s-t|)$ for every $s$, $t \in [0,1]$, hence $2C_{i,m}$ is a modulus of continuity for $\gamma$.
             
To show that~\eqref{eq:general-tightness} holds, we may assume that the right hand side therein is finite, otherwise the thesis is trivial. By~\eqref{eq:wasserstein-coupling}, we estimate from above using the triangular inequality for the Euclidean norm on $\R^{n\times d}$,
\[ d(\mu_s, \mu_t) \le \bra{ \frac 1 n \sum_{i=1}^n \abs{ \int_s^t h_r(X^i_r)dr }^2}^{1/2}  + \bra{ \frac 1 n \sum_{i=1}^n \abs{ \int_s^t \sigma_r(X^i_r)dB^i_r }^2 }^{1/2},\]
and we let, for $s$, $t \in [0,1]$,
\[ \gamma^1 (s,t) := \bra{ \frac 1 n \sum_{i=1}^n \abs{ \int_s^t h_r(X^i_r)dr }^2}^{1/2} \quad \text{and} \quad \gamma^2 (s,t) := \bra{ \frac 1 n \sum_{i=1}^n \abs{ \int_s^t \sigma_r(X^i_r)dB^i_r }^2 }^{1/2}\]
hence
\[ \E\sqa{ \Psi(\mu) }  = \E\sqa{\psi( \delta \mu )} \le\E\sqa{ \psi_1( \gamma^1)  } + \E\sqa{\psi_2( \gamma^2)}.\]
For $i \in \{1,2\}$, we have
\[\E\sqa{\psi_i(\gamma^i)} = \sum_{m \ge 0} (m+1) \E\sqa{\chi_{ A_i^c(2^{-m})}  \circ \gamma^i}.  \]
We focus on each term of the series above, writing for brevity $\veps$ in place of $2^{-m}$. By~\eqref{eq:set-modulus-contuniuity}, we have
\begin{equation*}\E\sqa{\chi_{ A_i^c(\veps)}  \circ \gamma^i}  = P\Big( \sup_{k =1, \ldots, n_i} \, (\gamma^i)^*_k > \veps \Big) \le \sum_{k=1}^{n_i} P\bra{ (\gamma^i)^*_k> \veps},\end{equation*}
where we write, $(\gamma^i)^*_k := \sup_{s \in \sqa{(k-1) \delta_i, k\delta_i}} \gamma^i(s,(k-1) \delta_i)$.

For $i=1$, we estimate (using Jensen inequality and the assumption $c_1\ge 2$)
\begin{equation*}\begin{split} P\bra{ (\gamma^1)^*_k > \veps} & \le \frac{1}{\veps^{c_1}} \E\sqa{ \bra{ \frac 1 n \sum_{i=1}^n  \abs{\int_{(k-1)\delta_1}^{k\delta_1}  \abs{h_r(X^i_r) } dr}^2  }^{c_1/2}}\\
 &\le \frac{1}{\veps^{c_1} }\E\sqa{ \frac 1 n \sum_{i=1}^n \bra{ \int_{(k-1)\delta_1}^{k\delta_1}  \abs{h_r(X^i_r) } dr}^{c_1}  }\\
  &\le \frac{\delta_1^{c_1-1}}{\veps^{c_1}} \E\sqa{  \int_{(k-1)\delta_1}^{k\delta_1}  \int \abs{h_r }^{c_1} d\mu_r dr  }
\end{split}\end{equation*}
Summing upon $k \in \cur{1, \dots, \delta^{-1}_{1,\veps}}$, and $\veps = 2^{-m}$, for $m \ge 0$ we obtain
\[ \E\sqa{\psi_1 \circ \gamma^1} \le  a^1  \E\sqa{ \int_0^1 \int \abs{h_s}^{c_1} d\mu_s ds }\]
where
\begin{equation}\label{eq:c1}
 a^1 = \sum_{m\ge 0} (m+1) \delta_{1,2^{-m} }^{c_1-1} 2^{c_1 m} \le  \sum_{m\ge 0} (m+1)  2^{- m}< \infty,
\end{equation}

For $i=2$, at fixed $\veps = 2^{-m}$ and $k \in \cur{1, \dots, \delta^{-1}_{2,\veps}}$, we estimate similarly, using~\eqref{eq:bg-hilbert} for $p =c_2$,
\begin{equation*}\begin{split} P\bra{ (\gamma^2)^*_k > \veps} & \le \frac{1}{\veps^{c_2}} \E\sqa{ \sup_{s \in[(k-1) \delta_2, k \delta_2]} \bra{\frac 1 n \sum_{i=1}^n \abs{ \int_{(k-1) \delta_2} ^s \sigma_r(X^i_r)dB^i_r }^2 }^{c_2/2}} \\
& \le \frac{c_{c_2} }{\veps^{c_2}} \E\sqa{ \bra{ \int_{(k-1) \delta_2} ^{k \delta_2} \frac 1 n \sum_{i=1}^n |\sigma_r|^2(X^i_r)dr }^{c_2/2}}\\
 &\le \frac{\delta_2^{c_2/2 - 1}}{\veps^{c_2} }\E\sqa{ \int_{(k-1) \delta_2} ^{k \delta_2} \frac 1 n \sum_{i=1}^n |\sigma_r|^{c_2}(X^i_r)dr }\\
  &= \frac{\delta_1^{c_2/2-1}}{\veps^{c_2}} \E\sqa{  \int_{(k-1)\delta_1}^{k\delta_1}  \int \abs{\sigma_r }^{c_2} d\mu_r dr  }
\end{split}\end{equation*}
Summing upon $k \in \cur{0, \ldots, \delta^{-1}_{2,\veps}}$ and $\veps= 2^{-m}$, for $m \ge 0$, we obtain
\[ \E\sqa{\psi_2\circ \gamma^2} \le  a^2  \E\sqa{ \int_0^1 \int \abs{\sigma_s}^{c_2} d\mu_s ds }\]
where
\begin{equation}\label{eq:c2} a^2 = \sum_{m\ge 0} (m+1) \delta_{2,2^{-m} }^{c_2/2-1} 2^{c_2 m}  \le \sum_{m\ge 0} (m+1) 2^{-m} < \infty,\end{equation}
and the thesis follows.
\end{proof}

\begin{proposition}\label{prop:technical-narrow-convergence}
Let $(X,d)$, $(Y, \delta)$ be Polish metric spaces, let $(f^n)_{n\ge 1 }$ be a sequence of random variables with values in $X$, converging in law towards $f$ and let $(F^n)_{n \ge 1}$ be a sequence of maps $F_n: X \to Y$, pointwise converging towards $F: X \to Y$, with $\sup_{n\ge 1} \operatorname{Lip}F^n  := L < \infty$. Then, the sequence $(F^n \circ f^n)_{n\ge 1}$ converges in law towards $F \circ f$.
\end{proposition}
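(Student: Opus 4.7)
The plan is to reduce convergence in law to almost sure convergence via Skorohod's representation theorem, and then exploit the uniform Lipschitz bound on $(F^n)$ together with its pointwise convergence to $F$. In spirit, this is a variant of the continuous mapping theorem adapted to a sequence of maps sharing a uniform modulus of continuity.

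First, I would verify that the limit map $F$ is itself Lipschitz with constant at most $L$, and hence Borel measurable. Indeed, for any $x, y \in X$, pointwise convergence together with continuity of $\delta$ yields
\[
\delta(F(x), F(y)) = \lim_{n \to \infty} \delta(F^n(x), F^n(y)) \le L\, d(x,y),
\]
so that $F \circ f$ is a well-defined $Y$-valued random variable.

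Since $(X,d)$ is Polish and $f^n \to f$ in law, Skorohod's representation theorem provides random variables $\tilde f^n$, $\tilde f$ on a common probability space, having the same respective laws as $f^n$, $f$, and such that $\tilde f^n \to \tilde f$ almost surely. The triangle inequality then gives
\[
\delta(F^n(\tilde f^n), F(\tilde f)) \le \delta(F^n(\tilde f^n), F^n(\tilde f)) + \delta(F^n(\tilde f), F(\tilde f)) \le L\, d(\tilde f^n, \tilde f) + \delta(F^n(\tilde f), F(\tilde f)).
\]
The first summand vanishes a.s.\ by the uniform Lipschitz bound and the almost sure convergence $\tilde f^n \to \tilde f$; the second vanishes a.s.\ by pointwise convergence of $F^n$ applied at $x = \tilde f(\omega)$. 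Therefore $F^n(\tilde f^n) \to F(\tilde f)$ almost surely, hence in law. Since $F^n(\tilde f^n)$ and $F^n(f^n)$ share the same law (and similarly for the limits), the desired convergence follows.

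There is essentially no serious obstacle here: the argument is routine once Skorohod's theorem is in place, and the sole role of the uniform Lipschitz constant is to handle the discrepancy $\delta(F^n(\tilde f^n), F^n(\tilde f))$ arising from the fact that the map itself varies with $n$. An alternative route bypassing Skorohod would test against a bounded uniformly continuous $\phi: Y \to \R$ and split $\E[\phi \circ F^n \circ f^n] - \E[\phi \circ F \circ f]$ as the sum of $\E[\phi(F^n(f^n)) - \phi(F(f^n))]$ (controlled via the uniform modulus of continuity of $\phi$ composed with uniformly Lipschitz maps, pointwise convergence $F^n \to F$, and tightness of $(f^n)$) and $\E[\phi \circ F \circ f^n] - \E[\phi \circ F \circ f]$ (handled by the classical portmanteau theorem, using continuity of $F$).
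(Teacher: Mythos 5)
Your argument is correct, but it takes a genuinely different route from the paper. You reduce the statement to almost sure convergence via Skorokhod's representation theorem (legitimate here, since $X$ is Polish), after which the uniform Lipschitz bound handles the discrepancy $\delta(F^n(\tilde f^n),F^n(\tilde f))\le L\,d(\tilde f^n,\tilde f)$ and pointwise convergence handles $\delta(F^n(\tilde f),F(\tilde f))$; the preliminary observation that $F$ inherits the Lipschitz constant $L$ (hence is Borel) is the right thing to check, and the identification of laws at the end is sound because each $F^n$ and $F$ is continuous. The paper instead argues directly at the level of test functions: for $\varphi\in C_b(Y)$ it splits $\int\varphi\,dq^n-\int\varphi\,dq$ into a term controlled by convergence in law of $f^n$ (using that $\varphi\circ F$ is bounded continuous) and a term controlled by tightness of the laws of $f^n$ together with uniform convergence of $\varphi\circ F^n\to\varphi\circ F$ on compact sets; the technical heart there is showing that $\bigcup_{n}F^n(K)$ is precompact in $Y$ for compact $K\subseteq X$, so that $\varphi$ is uniformly continuous on it and the family $\varphi\circ F^n$ is equicontinuous. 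Your alternative sketch in the last sentence is essentially this second argument. What each approach buys: yours is shorter and more transparent, at the cost of invoking the representation theorem; the paper's is more hands-on, staying within Prokhorov tightness and elementary compactness estimates, which makes the role of the uniform Lipschitz bound (equicontinuity plus precompactness of images) explicit. Both use Polishness, and both are complete proofs.
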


\begin{proof}
Let $p^n \in \cP(X)$ denote the law of $f^n$ and $q^n = (F^n)_\sharp p^n \in cP(Y)$ denote the law of $F^n \circ f^n$, and let $p$, $q$ denote respectively the laws of $f$ and $F \circ f$. Given $\varphi \in C_b(Y)$, we estimate
\begin{equation*}\begin{split}
\abs{ \int_Y \varphi q^n - \int_Y \varphi q } & = \abs{ \int_Y \varphi( F^n)  p^n - \int_Y \varphi(F) p }  \\
& \le \abs{ \int_Y \varphi(F^n) p^n - \int_Y \varphi(F) p^n } +  \abs{ \int_Y \varphi(F) p^n - \int_Y \varphi(F) p }.
\end{split}\end{equation*}
The latter term in the right hand side above converges to $0$, by definition of convergence in law. To estimate the former, we combine the facts that $(p^n)_{n\ge 1}$ is a tight family and that $\varphi(F^n) \to\varphi(F)$ uniformly on compact sets: given $\varepsilon >0$ and a compact set $K \subseteq X$ with $p^n(X) \ge 1-\veps$, for $n \ge 1$, we estimate
\[ \int_Y \abs{ \varphi(F^n)- \varphi(F) }p^n  \le \norm{\varphi (F^n) - \varphi (F) }_{L^\infty(K)} (1-\veps) + \norm{\varphi}_{L^\infty(Y) } \veps,\]
and we let first $n \to \infty$ and then $\veps\downarrow 0$.

To show uniform convergence on compact sets of $\varphi(F^n)$  towards $\varphi(F)$ we notice first that, given any compact set $K \subseteq X$, the set $\bigcup_{n\ge 1} F^n(K) \subseteq Y$ is pre-compact (i.e., its closure is compact). Indeed, given any sequence $(y^k)_{k \ge 1}$ in such a set, we may choose a corresponding sequence of points $(x^k)_{k\ge 1} \subseteq K$ and $n(k) \ge 1$  such that $F^{n(k)} (x^k) = y^k$ for every $k \ge 1$. We may always assume, up to extracting a subsequence, that $x^k \to x \in K$ as $k \to \infty$. If $n(k)$ is bounded, then for some $n \ge 1$ we have $n(k) = n$ for infinitely many $k$, so up to extracting a subsequence we have $y^k = F^n(x^k) \to F^n(x)$. Otherwise,  again up to a subsequence, we may assume that $n(k) \to \infty$, so that
\begin{equation*}\begin{split}
 \delta(F^{n(k)}(x^k), F(x) ) & \le \delta(F^{n(k)}(x^k), F^{n(k) } (x) ) +  \delta(F^{n(k)}(x), F(x) ) \\ 
 & \le L d(x^k, x ) + \delta(F^{n(k)}(x), F(x) ) \to 0
\end{split} \end{equation*}
as $k \to \infty$. As a consequence, $\varphi$ restricted to the closure of $\bigcup_{n\ge 1} F^n(K)$ is uniformly continuous, with a modulus of continuity $\omega$. Moreover, the family of maps $\varphi(F^n)$ is uniformly continuous on $K$ (uniformly in $n \ge 1$) since for $x^1$, $x^2 \in K$, 
\[ |\varphi(F^n(x^1)) -  \varphi(F^n(x^2))|  \le \omega\bra{ \delta(F^n(x^1), F^n(x^2) ) } \le \omega\bra{ L d(x^1, x^2) }. \]
From this it is straightforward that $\varphi(F^n) \to \varphi(F)$ uniformly on $K$: given $\veps>0$, we let $\alpha >0$ be such that $\omega{L \alpha } < \veps$ and consider a finite covering of $K$ with balls of radius $\alpha>0$ and centers $x^1 \ldots, x^k \in K$, so that for any $x \in K$, there is some $i \in \cur{1, \ldots k}$ such that
\begin{equation*}\begin{split}
|\varphi(F^n(x)) -  \varphi(F(x))|  & \le |\varphi(F^n(x)) -  \varphi(F^n(x^i))| + |\varphi(F^n(x^i)) -  \varphi(F(x^i))| + |\varphi(F(x^i)) -  \varphi(F^n(x))|\\
& < 2 \veps + |\varphi(F^n(x^i)) -  \varphi(F(x^i))| < 3\veps
\end{split} \end{equation*}
if $n$ is chosen sufficiently large.

%
%
\end{proof}

\begin{proposition}[weak-strong convergence]\label{prop:weak-strong}
Let $p \in [1, \infty)$ and let $(\mu^n)_{n\ge 1}$ be a sequence of random variables with values in $C([0,T]; \Td)$ converging in law towards  $\mu$, let $(\w^n)_{n\ge 1}$ be a sequence of mollifiers on $\Td$, with $\w^n \to \delta_0$, and set $\tilde{\mu}^n_t = \mu^n_t \conv \w^n$. Assume that $\tilde{\mu}^n_t = \rho_t^n(x) dx$, for a.e.\ $t \in [0,T]$, with $(\nabla \rho^n)_{n\ge 1}$ uniformly bounded in $L^p(\Omega \times [0,T] \times \Td  )$, i.e.,
\[ \sup_{n\ge 1} \E\sqa{ \int_0^T \int \abs{\nabla \rho^n_t }^p dt } < \infty.\]
Then, $\mu_t = \rho_t(x) dx$, for a.e.\ $t\in [0,T]$, with $\rho \in L^p(\Omega \times [0,T] \times\Td)$, and for every bounded continuous function $F: C([0,T]; \cP(\Td) ) \times L^p([0,T]; L^p(\Td)) \to \R$, with $F(\nu, \cdot)$ uniformly continuous (with respect to norm convergence) in $L^p([0,T]; L^p(\Td))$, uniformly in $\nu \in C([0,T]; \cP(\Td) )$, one has
\begin{equation}\label{eq:weak-strong}
\lim_{n \to \infty}\E\sqa{F(\mu^n, \rho^n)} = \E\sqa{ F(\mu, \rho) }.
\end{equation}
If $p\in (1, \infty)$, one has moreover $\rho \in L^p(\Omega \times [0,T], W^{1,p}(\Td))$.
\end{proposition}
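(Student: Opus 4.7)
\medskip

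\noindent\textbf{Proof plan for Proposition~\ref{prop:weak-strong}.} The plan is to combine a Skorohod representation argument with an Aubin--Lions/Simon-type compactness lemma in order to upgrade the in-law convergence of $(\mu^n)_n$ to strong $L^p([0,T]\times\Td)$-convergence of the densities $(\rho^n)_n$, after which the conclusion follows from dominated convergence together with the uniform continuity assumption on $F$. The natural enlarged random variable to consider is the pair $\bigl(\mu^n, \int_0^T \int \abs{\nabla \rho^n}^p\bigr)$ with values in $C([0,T]; \cP(\Td)) \times [0, +\infty]$: the first marginal is tight by hypothesis, while the second is tight in $[0,+\infty]$ by Markov's inequality applied to the uniform bound in expectation. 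Extracting a subsequence converging in law jointly, and applying Skorohod's theorem, we may assume almost-sure convergence on a common probability space towards $(\mu, L)$, with $L < \infty$ almost surely by Fatou.

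For almost every $\omega$, the sequence $\rho^n(\omega)$ has $L^1$-mass equal to $1$ and $\int_0^T \int \abs{\nabla \rho^n(\omega)}^p \le C(\omega) < \infty$, so by Poincar\'e's inequality on the torus it is uniformly bounded in $L^p([0,T]; W^{1,p}(\Td))$. Since $\mu^n(\omega) \to \mu(\omega)$ in $C([0,T]; \cP(\Td))$ and $\w^n \to \delta_0$ narrowly, the path $t \mapsto \tilde{\mu}^n_t(\omega) = \rho^n_t(\omega)\,dx$ is equi-continuous in the narrow topology (equivalently, in the $1$-Wasserstein distance), and hence in any negative Sobolev norm such as $W^{-1,\infty}(\Td)$. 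A Simon/Aubin--Lions compactness theorem applied to $W^{1,p}(\Td) \hookrightarrow L^p(\Td) \hookrightarrow W^{-1,\infty}(\Td)$, the first embedding being compact by Rellich--Kondrachov, yields pathwise relative compactness of $(\rho^n(\omega))_n$ in $L^p([0,T] \times \Td)$.

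Any $L^p$-limit point $\rho^{\ast}(\omega)$ is the density of the narrow limit $\mu(\omega)$ of $\tilde{\mu}^n(\omega)$, so by uniqueness of the density one has $\mu_t(\omega) = \rho^{\ast}(t, x)\,dx$, which also promotes the relative compactness to convergence of the whole sequence $\rho^n(\omega) \to \rho(\omega)$ in $L^p([0,T] \times \Td)$, almost surely. Combined with the continuity of $F$ and its uniform continuity in the second argument, bounded convergence delivers~\eqref{eq:weak-strong} along the extracted subsequence; the standard ``every subsequence has a sub-subsequence'' argument then removes the extraction. Finally, the regularity $\rho \in L^p(\Omega \times [0,T]; W^{1,p}(\Td))$ for $p \in (1, \infty)$ follows by reflexivity of $L^p$ (weak compactness of $(\nabla\rho^n)_n$) and Fatou applied to the uniform bound in expectation on $\norm{\nabla \rho^n}_{L^p}^p$.

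\noindent\textbf{Main obstacle.} The compactness step is the delicate one: the $W^{1,p}$-bound is only in expectation, hence not uniform in $\omega$, and the in-law convergence of $\mu^n$ alone yields only narrow convergence of $\tilde\mu^n$, which is incompatible with the $L^p$-norm topology required to evaluate $F(\nu, \cdot)$. Enlarging the random variable to include the $L^p_t W^{1,p}_x$-norm before invoking Skorohod, and then invoking a joint time-space compactness of Simon/Aubin--Lions type that essentially exploits the time equi-continuity coming from $C([0,T]; \cP(\Td))$, is what bridges this gap.
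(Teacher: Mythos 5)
Your plan is sound and, modulo the usual details, it proves the proposition; but it is a genuinely different route from the paper. The paper never passes to almost-sure convergence: it regularizes the \emph{second} argument by the heat semigroup $\mathsf{P}^\veps$, observes that $\nu \mapsto \mathsf{P}^\veps\nu$ is Lipschitz (with constant $\sim \veps^{-1/2}$) from the Wasserstein topology into $L^p_t(L^p_x)$ so that convergence in law passes to $F(\mu^n,\mathsf{P}^\veps\mu^n)$ via the continuous-mapping statement (Proposition~\ref{prop:technical-narrow-convergence}), and then controls the replacement error $\norm{\mathsf{P}^\veps\rho^n-\rho^n}_{L^p_t(L^p_x)} \le c_p\sqrt{\veps}\,\norm{\nabla\rho^n}_{L^p_t(L^p_x)}$ by the Poincar\'e inequality, handling the lack of a pathwise gradient bound through Markov's inequality and the assumed modulus of continuity $\delta$ of $F(\nu,\cdot)$; this is precisely where the uniform-continuity hypothesis is consumed, and the whole argument stays at the level of laws, letting $\veps\downarrow 0$ and $R\uparrow\infty$ at the end. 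Your approach instead buys pathwise strong $L^p$-convergence: enlarging the variable by the random norm $\int_0^T\!\int|\nabla\rho^n|^p$ before Skorohod is exactly what converts the bound in expectation into an $\omega$-wise bound, and then Simon/Aubin--Lions (with $W^{1,p}\hookrightarrow\hookrightarrow L^p\hookrightarrow(\mathrm{Lip})^*$, the time equicontinuity coming from uniform $W_1$-convergence and the fact that convolution is a $W_1$-contraction) plus identification of every $L^p$-limit point with the density of $\mu(\omega)$ gives $\rho^n\to\rho$ a.s.\ in $L^p_t(L^p_x)$; bounded convergence then needs only \emph{joint} continuity of $F$, so in that respect your argument is slightly stronger than the paper's (the uniform-continuity assumption becomes superfluous), at the price of heavier machinery. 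Two points you should make explicit to close the argument: (i) $\rho^n=\mu^n\conv\w^n$ is a deterministic measurable function of $\mu^n$, so the Skorohod copies automatically carry the densities and $\E[F(\mu^n,\rho^n)]$ is unchanged on the new space; (ii) the limit value $\E[F(\mu,\rho)]$ is determined by the law of $\mu$ alone (since $\rho$ is its density), which is what legitimizes the subsequence/sub-subsequence step. The final $W^{1,p}$ regularity claim is fine either by your weak-compactness argument or, as in the paper, by lower semicontinuity of the $L^p_t(W^{1,p}_x)$-norm under narrow convergence together with Fatou.
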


In explicit terms, the uniform continuity assumption on $F(\nu, \cdot)$ means that there exists some modulus of continuity $\delta: [0,\infty) \to [0,\infty)$ such that, for every $\nu \in C([0,T]; \cP(\Td) )$, one has
\[
\abs{F(\nu, \rho^1) - F(\nu, \rho^2)} \le \delta\bra{ \norm{\rho^1- \rho^2}_{L^p_t(L^p_x)} },\quad  \text{ for every $\rho^1$, $\rho^2 \in L^p([0,T]; L^p(\Td))$.}
\]
Notice that, since $F$ is uniformly bounded, we may assume $\delta$ to be bounded as well.

\begin{proof}
Without any loss of generality, we consider the case $T=1$ only. Let us first notice that $\tilde{\mu} \to \mu$ in law as random variables with values in $C([0,1]; \cP(\Td))$, by Proposition~\ref{prop:technical-narrow-convergence}, using the fact that, for every $n \ge 1$, $\nu \mapsto (\nu_t \conv \w^n)_{t \in [0,1]}$ is a contraction with respect to the natural distance on $C([0,1]; \cP(\Td))$:
\[ d_1( \nu_t^1 \conv \w^n, \nu_t^1 \conv \w^n) \le  d_1( \nu_t^1,  \nu_t^1), \text{for every $t \in [0,T]$, $\nu^1, \nu^2 \in C([0,1]; \cP(\Td))$,}\]
and $d_1$ denotes the Wasserstein-Kantorovich distance with exponent $1$.

The second statement then follows from the fact that, for $p \in (1,\infty)$, the norm in $L^p([0,1]; W^{1,p}(\Td))$ is lower semicontinuous as a functional on $C([0,1]; \Td)$, when defined $+\infty$ outside of $L^p([0,1]; W^{1,p}(\Td))$ (for $p=1$, we would obtain a BV estimate).  Hence,
\[ \E\sqa{\norm{\mu}_{L^p_t(W^{1,p}_x) }^2} \le \liminf_{n \to \infty} \E\sqa{\norm{\tilde{\mu}^n}_{L^p_t(W^{1,p}_x) }^p}< \infty.\]
\newcommand{\sfP}{\mathsf{P}}
\newcommand{\sfp}{\mathsf{p}}
To prove the second statement, we use the smoothing action on measures  of the   standard heat semigroup on $\Td$, $(\sfP^\veps)_{\veps>0}$, i.e., the symmetric Markov transition semigroup associated to the Brownian motion on $\Td$.
For $\veps>0$, $\nu \in C([0,1]; \cP(\Td))$, we consider its action on space variables only, i.e., we let $(\sfP^\veps \nu)_t = \sfP^\veps \nu_t$. Since
 the convolution kernel $\sfp^\veps$ of $\sfP^\veps$ is smooth with gradient uniformly bounded by (some constant times) $1/\sqrt{\veps}$,  we have for $\nu^1$, $\nu^2 \in C([0,1]; \cP(\Td))$ the bound
\begin{equation*} \begin{split} \norm{ \sfP^\veps \nu^1 - \sfP^\veps \nu^2 }_{L^2_t(L^2_x) } &  \le \norm{ \sfP^\veps \nu^1 - \sfP^\veps \nu^2 }_{L^\infty_t(L^\infty_x) } \\ & \le  \sup_{t \in [0,T]} \sup_{x \in \Td} \abs{ \int \sfp^\veps(x-y) \nu^1_t(dy) - \int \sfp^\veps(x-y) \nu^2_t(dy) }\\
& \le \norm{ \nabla \sfp^\veps }_{L^\infty(\Td)}  \sup_{t \in [0,T]} d_1( \nu^1_t, \nu^2_t) \le \frac{c}{\sqrt{\veps}} d(\nu^1, \nu^2). 
\end{split}\end{equation*}
Hence for fixed $\veps>0$, one has
\[ \lim_{n \to \infty } \E\sqa{ F(\mu^n, \sfP^\veps \mu^n ) } \to \E\sqa{ F(\mu, \sfP^\veps \mu ) }.\]
Actually, by Proposition~\ref{prop:technical-narrow-convergence}, we also have
\[ \lim_{n \to \infty } \E\sqa{ F(\mu^n, \sfP^\veps \rho^n ) } \to \E\sqa{ F(\mu, \sfP^\veps\mu ) }.\]
On the other side, by the Poincar\'e inequality with exponent $p \in [1, \infty)$, for every $n \ge 1$ (as well as in the limit as $n \to \infty$),
\[
  \norm{  \sfP^\veps \rho^n - \rho^n }_{L^p_t(L^p_x) } \le c_p \sqrt{\veps} \norm{  \nabla \rho^n }_{L^p_t(L^p_x) },
\]
where $c_p$ is some constant depending on $p$ (and $d$) only. Hence,
\begin{equation*}\begin{split}
 \abs{ \E\sqa{ F(\mu^n, \sfP^\veps \rho^n )  } - \E\sqa{ F(\mu^n, \rho^n) } }  & \le  \E\sqa{ \delta\bra{ \norm{ \sfP^\veps \rho^n - \rho^n }_{L^2_t(L^2_x) } } }\\
  & \le  \E\sqa{ \delta\bra{ c_p \sqrt{\veps} \norm{ \nabla \rho^n }_{L^p_t(L^p_x) }  } } \\
    & \le   \delta\bra{c_p \sqrt{\veps} R }  +  \norm{\delta}_\infty \Prob\bra{ \norm{ \nabla \rho^n }_{L^p_t(L^p_x) } > R },
\end{split} \end{equation*}
where $\delta$ denotes a modulus of continuity for $F(\nu, \cdot)$, uniform with respect to $\nu \in C([0,T]; \cP(\Td))$. A similar inequality holds for $\mu$, $\rho$ in place of $\mu^n$ and $\rho^n$. Finally~\eqref{eq:weak-strong} follows the inequality
\begin{equation*}\begin{split}
\limsup_{n \to \infty} \abs{ \E\sqa{ F(\mu^n, \rho^n) - \E\sqa{ F(\mu, \rho)  }} }  & \le \limsup_{n \to \infty}  \abs{ \E\sqa{ F(\mu^n, \rho^n)} - \E\sqa{ F(\mu^n, \sfP^\veps \mu^n)  } } + \\
 & + \limsup_{n \to \infty} \abs{ \E\sqa{ F(\mu^n, \sfP^\veps \rho^n)} - \E\sqa{ F(\mu, \sfP^\veps \rho)  } }+\\
 & + \abs{ \E\sqa{ F(\mu, \rho)} - \E\sqa{ F(\mu, \sfP^\veps \rho )  } }\\
  & \le  2 \delta\bra{c_p \sqrt{\veps} R }  +  2 \frac{ \norm{\delta}_\infty}{R^p} \sup_{n \ge 1} \E\sqa{ \norm{ \nabla \rho^n }_{L^p_tL^p_x}^p },
\end{split}\end{equation*}
letting first $\veps \downarrow 0$ and then $R \uparrow \infty$.
\end{proof}

\bibliography{a-particle-system}

\end{document}